\theoremstyle{thmstyleone}%
\newtheorem{theorem}{Theorem}
\newtheorem{lemma}[theorem]{Lemma}%
\theoremstyle{thmstyletwo}%
\theoremstyle{thmstylethree}%
\DeclareMathOperator{\spn}{span}
\newcommand{\BR}{\mathbb R}
\newcommand{\Omegax}{\Omega^{(1)}}
\newcommand{\Omegav}{\Omega^{(2)}}
\newcommand{\Omegai}{\Omega^{(i)}}
\newcommand{\Vx}{V^{(1)}}
\newcommand{\Vv}{V^{(2)}}
\newcommand{\Vi}{V^{(i)}}
\newcommand{\Gammar}{\widehat\Gamma^-}
\newcommand{\Gammarm}{\widehat\Gamma^-}
\newcommand{\Gammarp}{\widehat\Gamma^+}
\newcommand{\Gammarpm}{\widehat\Gamma^\pm}
\newcommand{\Gammax}{\widehat\Gamma^-_x}
\DeclareBoldMathCommand{\bbeta}{\beta}
\DeclareBoldMathCommand{\balpha}{\alpha}
\DeclareBoldMathCommand{\br}{r}
\DeclareBoldMathCommand{\bk}{k}
\DeclareBoldMathCommand{\bb}{b}
\DeclareBoldMathCommand{\bv}{v}
\DeclareBoldMathCommand{\bx}{x}
\DeclareBoldMathCommand{\bu}{u}
\DeclareBoldMathCommand{\bj}{j}
\DeclareBoldMathCommand{\bs}{s}
\DeclareBoldMathCommand{\bl}{l}
\DeclareBoldMathCommand{\bell}{\ell}
\DeclareBoldMathCommand{\be}{e}
\DeclareBoldMathCommand{\bn}{n}
\DeclareBoldMathCommand{\bE}{E}
\DeclareBoldMathCommand{\bff}{f}
\DeclareBoldMathCommand{\bone}{1}
\begin{document}

\title[Sparse Grid DG Method with SD for Transport Equations]{Sparse Grid Time-Discontinuous Galerkin Method with Streamline Diffusion for Transport Equations}

\author*[1]{\fnm{Andreas} \sur{Zeiser}}\email{andreas.zeiser@htw-berlin.de}

\affil*[1]{\orgdiv{Fachbereich 1 Ingenieurwissenschaften -- Energie und Information}, \orgname{HTW Berlin}, \orgaddress{\street{Wilhelminenhofstr. 75A}, \city{Berlin}, \postcode{12459}, \country{Berlin}}}

\abstract{High-dimensional transport equations frequently occur in science and engineering. Computing their numerical solution, however, is challenging due to its high dimensionality. 

In this work we develop an  algorithm to efficiently solve the transport equation in moderately complex geometrical domains using a Galerkin method stabilized by streamline diffusion. The ansatz spaces are a tensor product of a sparse grid in space and discontinuous piecewise polynomials in time. Here, the sparse grid is constructed upon nested multilevel finite element spaces to provide geometric flexibility. This results in an implicit time-stepping scheme which we prove to be stable and convergent. If the solution has additional mixed regularity, the convergence of a $2d$-dimensional problem equals that of a $d$-dimensional one up to logarithmic factors.

For the implementation, we rely on the representation of sparse grids as a sum of anisotropic full grid spaces. This enables us to store the functions and to carry out the computations on a sequence regular full grids exploiting the tensor product structure of the ansatz spaces. In this way existing finite element libraries and GPU acceleration can be used. The combination technique is used as a preconditioner for an iterative scheme to solve the transport equation on the sequence of time strips. 

Numerical tests show that the method works well for problems in up to six dimensions. Finally, the method is also used as a building block to solve nonlinear Vlasov-Poisson equations.}

\keywords{sparse grid, high-dimensional transport equations, streamline diffusion, combination technique, Vlasov-Poisson equations}

\pacs[MSC Classification]{65M60, 65M12}

\maketitle

\section{Introduction}

High-dimensional transport equations frequently arise  in science and engineering. In this paper we will consider the linear equation
\begin{align} \label{eq:transport}
\begin{split}
&\partial_t u + \bbeta(t,\br) \cdot \nabla_{\br} u + \sigma(t,\br) \, u = f(t,\br), \quad \textnormal{in } 
(0,T) \times \Omega  \\
& u(0,\br) = u_0(\br)
\end{split}
\end{align}
subject to the inflow boundary condition
\begin{align} \label{eq:inflow}
u=g \quad \textnormal{on } \Gamma_\br^- = \big\{(t,\br)\in(0,T)\times \partial\Omega\,\big\vert \, \bbeta(t,\br)\cdot \bn(\br) < 0 \big\}
\end{align}
where $\bn(\br)$ is the outward unit normal of the boundary, and $\bbeta,\,f, \,\sigma,\, g,\, u_0$ are smooth enough functions. We will focus on the case where the variable $\br$ can be partitioned into $\br = (\bx, \bv)$ and $\bx, \bv$ are $d$-dimensional variables with $d=1,2,3$ (i.e.,\ space and velocity). We assume that $\Omega = \Omegax \times \Omegav$ is a product domain with $\Omegax, \Omegav \subset \BR^d$, i.e.\
\[ u: (0,T) \times \Omegax \times \Omegav \rightarrow \mathbb R \]
is a function in time and two variables with dimension $d$.

Simulations of such systems are computationally demanding since the time evolution of an up to six-dimensional function has to be  calculated. Applying standard discretization schemes leads to an evolution equation in $\mathcal O(n^{2d})$ degrees of freedom, where $n$ is the number of grid points in one dimension and a corresponding high computational effort. To tackle the problem, methods such as  particle methods \cite{verboncoeur2005},  adaptive multiscale methods \cite{besse2008,deriaz2018} and tensor product methods \cite{einkemmer2018a} have been used. 

Sparse grids \cite{bungartz2004} are a means of overcoming the curse of dimensionality and have been applied to transport equations \cite{schwab2008} as well as to kinetic equations: in \cite{bokanowski2013a,kormann2016a} interpolation on sparse grids is used in a semi-Lagrangian method while \cite{guo2016} uses sparse grids with discontinuous ansatz functions. In these approaches tensor products of one-dimensional bases are used, restricting the domain to rectangular regions. However, finite element spaces can also be used in the construction of sparse grid spaces \cite{griebel2013,harbrecht2008} allowing for greater geometric flexibility. 

The related problem of radiative transfer has been studied widely. In \cite{kanschat1998} standard finite element discretization has been used to solve the stabilized equation. To cope with the high dimensionality, sparse grids of finite element spaces  \cite{widmer2008}, sparse tensor product of multilevel finite element spaces and spherical harmonics  \cite{grella2011} as well as a corresponding sparse grid combination technique \cite{grella2011a} have been used. More recently adaptive schemes are applied to solve the parameter dependent transport equation \cite{dahmen2018,dahmen2019}.

Our aim is to use sparse grid spaces based upon finite element spaces to devise a stable, convergent and efficient method for the solution of kinetic equations in moderately complex geometrical domains. We also focus on their efficient implementation using well-established finite element libraries and fast algorithms. 

For the discretization of space continuous piecewise polynomial finite element spaces for each domain $\Omega^{(i)}$ are used. Compared to discontinuous elements less degrees of freedom are needed to represent these functions. This is especially important as this factor enters quadratically in the degrees of freedom of the sparse grid space. 

The time domain is partitioned and on each time strip space-time elements are constructed by tensorizing polynomials in time with sparse grids in space. Using this space in a Galerkin method results in transport equations on each time strip, where the result of the previous step enters the current step as an initial condition. Enforcing this condition weakly leads to discontinuous functions in time.

However, it is known that the finite element method is unstable if the solution has discontinuities and hence has to be stabilized. One option is to use the streamline diffusion method \cite{johnson1984}. There, an artificial diffusion is added in the direction of the streamline leading to a stable scheme, while still keeping the order of the method.

This method can be interpreted as an implicit time-stepping scheme, where stabilized transport equations have to be solved on each time strip. For that purpose matrix-free iterative methods are applied. For an efficient calculation the sparse grid functions are represented as a linear combination of functions in anisotropic full grid spaces, which is also used in the combination technique (see \cite{garcke2012} and references therein). This leads to regular data structures and fast applications of the discretization matrices by exploiting the tensor product structure. Finally, the combination technique will serve as a preconditioner for the iterative solution of the corresponding equations \cite{griebel1994}. 

We will proceed along the following lines. In Sect. \ref{sec:discretization} sparse grid spaces are introduced, and the weak form, including streamline diffusion, is employed to derive discrete equations for each time step. This algorithm is analyzed in Sect. \ref{sec:analysis} in the case of constant coefficients with respect to stability and convergence. Sect. \ref{sec:algorithm} describes the algorithm for efficient computations based on the tensor product structure and multilevel spaces. The algorithm is applied to test cases in Sect. \ref{sec:experiments} as well as to Vlasov-Poisson equations in one and two spatial dimensions.

In the following we denote by $C \lesssim D$ that $C$ is bounded by a multiple of $D$ independently of the level of the sparse grid. $C \sim D$ is defined as $C \lesssim D$ and $D \lesssim C$.

\section{Discretization and Weak Formulation\label{sec:discretization}}

In this section we introduce the weak formulation of the transport problem including streamline diffusion \cite{johnson1984} for stabilization. For the discretization we will use a sparse grid based on finite elements and discontinuous polynomials in time. Finite element spaces have been used in the construction of sparse grids, realized either by wavelet-type \cite{schwab2003,widmer2008} or multilevel frames \cite{harbrecht2008}. We will represent sparse grids as in \cite{griebel2014}. This enables us to use classical nested finite element spaces. We partition the time domain, use discontinuous piecewise polynomials and a full tensor product with the sparse grid, see \cite{hilber2009} in the case of parabolic equations. Eventually this will lead us to an implicit time-stepping scheme.

Let $\Omegax$ and $\Omegav$ be two open polyhedral Lipschitz domains in $d$ dimensions and  $\Vi_1$ conforming finite element spaces on $\Omegai$ of piecewise polynomial ansatz functions. Define nested multilevel finite element spaces 
\begin{align*}
\Vi_1 \subset \Vi_2 \subset \ldots \subset H^1(\Omegai), \quad i=1,2
\end{align*}
by uniform refinement such that $\dim \Vi_\ell \sim 2^{d\ell}$. Based on these spaces the sparse tensor product space of level $L=1,2,\ldots$
\begin{align} \label{eq:sg_space}
V_L = \sum_{\vert \bell\vert_1 =  L + 1} \Vx_{\ell_1} \otimes \Vv_{\ell_2}, \quad  \bell = [\ell_1,\ell_2], \, \vert \bell\vert_1 = \ell_1+\ell_2
\end{align}
is constructed with  $\dim V_L \lesssim L 2^{d L}$ \cite{griebel2014}. Up to a logarithmic factor the number of degrees of freedom scales like a standard globally refined finite element discretization of a $d$-dimensional domain.

In the time domain $J=(0,T)$ define $0 = t_0 < t_1 < \ldots < t_N = T$ and the partition 
\[ \mathcal I = \{ I_j \}_{j=1,\ldots,N},\quad I_j = (t_{j-1},t_j), \quad \vert \mathcal I\vert  = \max_{j=1,\ldots,N}(t_j - t_{j-1}). \]
The discrete function space is defined as the set of all piecewise polynomials of order $r$ with coefficients in $V_L$, i.e.
\[ V_{L,\mathcal I} = \big\{ v\in L^2\big( J; V_L\big) \,\big\vert \, v\vert_I \in \mathbb P^r(I; V_L) \textnormal{ for all } I\in \mathcal I \big\}, \]
where $\mathbb P^r$ denotes the polynomials degree $r$. 

We will use the space $V_{L,\mathcal I}$ in a Galerkin scheme to compute an approximate solution of \eqref{eq:transport} and \eqref{eq:inflow}. 

For that purpose, we use the streamline diffusion method on each time strip $I_j$ \cite{johnson1984}. The initial condition of \eqref{eq:transport} and the inflow condition \eqref{eq:inflow} are enforced in a weak sense: 

For $j=1,2,\ldots,N$ find a function $U_j \in \mathbb P^r(I_j, V_L)$ on the time strip $I_j \in \mathcal I$, $j=1,\ldots,N$, such that
\begin{align} \label{eq:weak_form_j}
a_j^{(\delta)}(U_j,w) = b_j^{(\delta)}(w), \quad \textnormal{for all } w \in \mathbb P^r(I_j, V_L)
\end{align}
where $\delta > 0$ is the parameter for the streamline diffusion and
\begin{align*}
\begin{split}
a_j^{(\delta)}(v,w) &= \int_{I_j} \big(\partial_t v + \partial_\bbeta v + \sigma v, w + \delta (\partial_t w + \partial_\bbeta w)\big) 
		- \langle v, w\rangle_{\Gammar(t)} \, \mathrm d t \\
	& \quad + (v_{j-1}^+, w_{j-1}^+) \\
b_j^{(\delta)}(w) &= \int_{I_j} \big(f, w + \delta (\partial_t w + \partial_\bbeta w)\big)
		- \langle g, w\rangle_{\Gammar(t)} \, \mathrm d t
	+(U_{j-1}^-, w_{j-1}^+).
\end{split}
\end{align*}
Here $\partial_\bbeta u = \bbeta \cdot \nabla u$, $u_j^\pm = \lim_{t\rightarrow t_j^\pm} u(t, \cdot)$, $U_0^- = u_0$. By $(\cdot,\cdot)$ and $\|\cdot\|$ denote the inner product and norm on $L^2(\Omega)$, respectively. For the boundary define
\begin{align*} 
\Gammarpm(t) = \big\{\br\in\partial\Omega \,\big\vert \, \bbeta(t,\br) \cdot \bn(\br) \gtrless 0 \big\}
\end{align*} 
and for any $\widehat \Gamma \subset \partial \Omega$
\begin{align*}
\langle u,v\rangle_{\widehat \Gamma} = \int_{\widehat \Gamma} (\bbeta \cdot \bn) \, u v \, \mathrm d \bs, \quad \vert u\vert_{\widehat \Gamma}^2 = \int_{\widehat \Gamma} \vert \bbeta\cdot \bn\vert  u^2 \, \mathrm d \bs. 
\end{align*}

Equation \eqref{eq:weak_form_j} can be solved successively, resulting in an implicit time-stepping procedure. Finally, the approximate solution $U \in V_{L,\mathcal I}$ is composed of the solutions $U_j$ with potential jumps at $t_j$.

\section{Analysis -- Constant Coefficients \label{sec:analysis}}

In this section the stability and convergence of the discretization is analyzed for the case of constant coefficients $\bbeta$ and $\sigma$. Hence, for the rest of this section we assume $\bbeta \in \BR^{2d}$, $\sigma \ge 0$, $f\in L^2(J\times \Omega)$ and $g\in L^2(J \times \Gammarm)$. A similar analysis for stationary transport dominated problems has been carried out in \cite{schwab2008} and for parabolic equations in \cite{hilber2009}. For an introduction to discontinuous time-stepping schemes and finite element spaces see \cite{larsson2005}.

However, these results have to be adapted to the current setting. With respect to stability, the weak formulation of the boundary condition has to be taken into account. To show convergence, classical approximation results for piecewise polynomial ansatz functions have to be combined with sparse grid estimates to handle the tensor product ansatz functions.

For the analysis we formulate the problem on the space $V_{L,\mathcal I}$ by summing up \eqref{eq:weak_form_j} for $j=1,\ldots,N$. This gives 
\begin{align} \label{eq:weak_eqn}
A_{\mathcal I}^{(\delta)}(U,w) = B_{\mathcal I}^{(\delta)}(u_0,f,g; w) \quad \textnormal{for all } \, w \in V_{L,\mathcal I},
\end{align}
where 
\begin{align}\label{eq:AI}
\begin{split}
A_{\mathcal I}^{(\delta)}(v,w) &= \sum_{j=1}^N \int_{I_j} \big( \partial_t v + \partial_\bbeta v + \sigma v, w + \delta ( \partial_t w + \partial_\bbeta w) \big) 
		- \langle v, w\rangle_{\Gammarm} \, \mathrm d t \\
& \quad + \sum_{j=1}^{N-1} ([v]_j, w_j^+) + (v_0^+, w_0^+)
\end{split} \\
B_{\mathcal I}^{(\delta)}(u_0,f,g; w) 
	&= \sum_{j=1}^N \int_{I_j} \big( f, w + \delta ( \partial_t w + \partial_\bbeta w) \big) -  \langle g, w\rangle_{\Gammarm} \, \mathrm d t 
	+ \big( u_0, w_{0}^+ \big) \label{eq:BI}
\end{align}
with the jump term $[v]_j = v_j^+ - v_j^-$. Note that the boundaries $\Gammarpm$ are now time independent.

For the analysis the streamline diffusion norm  
\begin{align} \label{eq:norm_SD}
\begin{split}
 \vvvert v \vvvert_{\delta}^2 &= 
	\sum_{j=1}^N \int_{I_j} \sigma \|v\|^2 + \delta \| \partial_t v + \partial_\bbeta v\|^2 
		+ \vert v\vert ^2_{\partial\Omega} \, \mathrm d t  \\
	&+ \sum_{j=1}^{N-1} \| [v]_j \|^2  + \| v_0^+\|^2 + \| v_N^-\|^2 	
\end{split}
\end{align}
will play a central role. This norm gives extra control of the variations in the direction of the streamline.

\subsection{Stability}

In the first step we show that the method is stable with respect to the streamline diffusion norm. The following equivalence plays a central role.

\begin{lemma} \label{lem:Bvv_SD}
Let $v \in H^1(J \times \Omega)$, $\sigma \ge 0$, $0 < \delta$ and $\delta<1/\sigma$ in the case $\sigma > 0$. Then
\begin{align} \label{eq:norm_equivalence}
\frac 1 2 \vvvert v \vvvert_{\delta}^2 \le A_{\mathcal I}^{(\delta)}(v,v) \le \frac 3 2 \vvvert v \vvvert_{\delta}^2.
\end{align}
\end{lemma}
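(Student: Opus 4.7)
The plan is to compute $A_{\mathcal I}^{(\delta)}(v,v)$ explicitly, obtain an exact identity of the form $\tfrac12\vvvert v\vvvert_\delta^2 + R$, and then bound the remainder $R$ using the hypothesis $\delta<1/\sigma$. First I would distribute the inner product in \eqref{eq:AI} so that the integrand reads
\[
(1+\delta\sigma)\,(\partial_t v + \partial_\bbeta v,\,v) + \sigma\|v\|^2 + \delta\|\partial_t v + \partial_\bbeta v\|^2,
\]
isolating the first summand as the only one that will produce boundary contributions on integration by parts.

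Next I would integrate by parts on each $I_j$. The time part yields $\tfrac12(\|v_j^-\|^2-\|v_{j-1}^+\|^2)$, and because $\bbeta$ is constant the advection part yields $\tfrac12\int_{I_j}(|v|^2_{\Gammarp}-|v|^2_{\Gammarm})\,dt$; call this sum $Q_j$. Splitting $(1+\delta\sigma)Q_j = Q_j + \delta\sigma Q_j$, the pure $Q_j$-contribution combines with the remaining ingredients of $A_{\mathcal I}^{(\delta)}(v,v)$: the inflow term $-\langle v,v\rangle_{\Gammarm}=|v|^2_{\Gammarm}$ fills in the boundary sum $\tfrac12(|v|^2_{\Gammarp}-|v|^2_{\Gammarm})+|v|^2_{\Gammarm}=\tfrac12|v|^2_{\partial\Omega}$, while the telescoping time sum combines with $\|v_0^+\|^2$ and the jumps $([v]_j,v_j^+)$ (which vanish because $v\in H^1(J\times\Omega)$ forces $[v]_j=0$) to give $\tfrac12\|v_0^+\|^2+\tfrac12\|v_N^-\|^2$. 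Reassembling this against \eqref{eq:norm_SD} should yield the identity
\[
A_{\mathcal I}^{(\delta)}(v,v) = \tfrac12\vvvert v\vvvert_\delta^2 + \tfrac12\!\int_J\!\big(\sigma\|v\|^2+\delta\|\partial_t v+\partial_\bbeta v\|^2\big)\,dt + R,\quad R:=\delta\sigma\sum_j Q_j.
\]

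The main obstacle is controlling $R=\delta\sigma\int_J(v,\partial_t v+\partial_\bbeta v)\,dt$, which has no definite sign. Two Cauchy--Schwarz applications (pointwise in $\Omega$, then in $t$), combined with the symmetric split $\delta\sigma = \sqrt{\delta\sigma}\,\sqrt\sigma\,\sqrt\delta$ and Young's inequality, should give
\[
|R| \le \tfrac{\sqrt{\delta\sigma}}{2}\int_J\big(\sigma\|v\|^2 + \delta\|\partial_t v+\partial_\bbeta v\|^2\big)\,dt.
\]
The hypothesis $\delta<1/\sigma$ (the case $\sigma=0$ is trivial since then $R=0$) yields $\sqrt{\delta\sigma}\le 1$, so $|R|$ is bounded by $\tfrac12$ of the volume integral appearing in the identity. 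The lower bound $\tfrac12\vvvert v\vvvert_\delta^2\le A_{\mathcal I}^{(\delta)}(v,v)$ then follows immediately because $R$ at worst cancels the extra $\tfrac12\int(\cdots)\,dt$, while the upper bound $\le\tfrac32\vvvert v\vvvert_\delta^2$ follows because the volume integral is itself majorized by $\vvvert v\vvvert_\delta^2$.
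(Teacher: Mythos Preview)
Your argument is correct and follows essentially the same route as the paper: both compute $A_{\mathcal I}^{(\delta)}(v,v)$ by integration by parts to obtain the exact identity
\[
A_{\mathcal I}^{(\delta)}(v,v)=\tfrac12\vvvert v\vvvert_\delta^2+\tfrac12\!\int_J\!\big(\sigma\|v\|^2+\delta\|\partial_t v+\partial_\bbeta v\|^2\big)\,\mathrm dt+\delta\sigma\!\int_J(v,\partial_t v+\partial_\bbeta v)\,\mathrm dt,
\]
and then bound the cross term by Young's inequality together with $\delta\sigma<1$. The only noteworthy difference is that the paper obtains this identity by averaging $A_{\mathcal I}^{(\delta)}(v,v)$ with its integrated-by-parts form and keeps the jump contributions $\tfrac12\sum\|[v]_j\|^2$ in the identity, whereas you invoke $v\in H^1(J\times\Omega)$ to set $[v]_j=0$ from the start; the paper's version therefore extends verbatim to piecewise-$H^1$ functions with jumps (which is how the lemma is actually applied to the discrete solution $U\in V_{L,\mathcal I}$), while your proof, though perfectly valid for the lemma as stated, would need a small addendum to cover that case.
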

\begin{proof}
Applying Green's identity 
\begin{align*}
\int_{I_j} (\partial_t v, w) \, \mathrm d t
	&=  \big(v_j^-, w_j^-\big) - \big(v_{j-1}^+,w_{j-1}^+\big) 
		-\int_{I_j} \big( v,  \partial_t w \big) \, \mathrm dt \\
\int_{I_j} (\partial_\bbeta v, w) \, \mathrm d t
	&= \int_{I_j} \langle v, w\rangle_{\partial \Omega} - (v, \partial_\bbeta w) \, \mathrm d t
\end{align*}
on \eqref{eq:AI} gives
\begin{align} \label{eq:AI_partint}
\begin{split}
A_{\mathcal I}^{(\delta)}(v,w)
	 &= \sum_{j=1}^N \int_{I_j} (-v, \partial_t w + \partial_\bbeta w) + (\sigma v, w + \delta(\partial_t w + \partial_\bbeta w)) \,  \mathrm d t \\
& \quad + \sum_{j=1}^N \int_{I_j}\delta (\partial_t v + \partial_\bbeta v, \partial_t w + \partial_\bbeta w) + \langle v, w\rangle_{\Gammarp} \,  \mathrm d t \\
&\quad	 +  \sum_{j=1}^{N-1} (v_j^-, -[w]_j) + (v_N^-,w_N^-).
\end{split}
\end{align}
Averaging the two forms for $A_{\mathcal I}^{(\delta)}(v,v)$ gives
\begin{align} \label{eq:Avv}
\begin{split}
A_{\mathcal I}^{(\delta)}(v,v) &= 
	\sum_{j=1}^N \int_{I_j}  \sigma \|v\|^2 + \frac 1 2 \vert v\vert^2_{\partial\Omega} 
		+ \delta \| \partial_t v + \partial_\bbeta v\|^2 \, \mathrm d t 
		 \\
& +  \sum_{j=1}^N \int_{I_j}  \delta \sigma (v, \partial_t v + \partial_\bbeta v) \, \mathrm d t + \frac 1 2  \sum_{j=1}^{N-1} \| [v]_j \|^2 
		+ \frac 1 2  \|v_0^+\|^2 + \frac 1 2 \| v_N^-\|^2.
\end{split}
\end{align}
For $\sigma = 0$ 
\begin{align*}
A_{\mathcal I}^{(\delta)}(v,v) &= 
	\sum_{j=1}^N \int_{I_j}  \frac 1 2 \vert v\vert^2_{\partial\Omega} 
		+ \delta \| \partial_t v + \partial_\bbeta v\|^2 \, \mathrm d t 
		 	+ \frac 1 2  \sum_{j=1}^{N-1} \| [v]_j \|^2 
				+ \frac 1 2  \|v_0^+\|^2 + \frac 1 2 \| v_N^-\|^2,
\end{align*}
hence \eqref{eq:norm_equivalence} holds. In the case  $\sigma > 0$ the second term can be estimated by
\begin{align*}
\Big\vert  \sum_{j=1}^N \int_{I_j} \delta \sigma (v, \partial_t v + \partial_\bbeta v) \, \mathrm d t  \Big\vert 
\le \sum_{j=1}^N \int_{I_j} \frac{\delta \sigma^2}{2} \|v\|^2 + \frac{\delta}{2} \| \partial_t v + \partial_\bbeta v\|^2\, \mathrm d t. 
\end{align*}
Using $\delta < 1 / \sigma$, the equivalence follows directly using the definition of the streamline diffusion norm \eqref{eq:norm_SD}. 
\end{proof}

Now we use the Lemma to prove stability.
\begin{theorem} 
If $\delta>0$ and $\delta < 1/\sigma$ in the case $\sigma >0$, the weak form of the streamline diffusion equation with constant coefficients \eqref{eq:weak_eqn} has a unique solution $U$. If $\sigma > 0$ the system is stable in the sense that the solution satisfies
\begin{align*}
\vvvert U \vvvert_{\delta}^2 \lesssim 
 \sum_{j=1}^N \int_{I_j} \|f\|^2 + \vert g\vert ^2_{\Gammarm} \, \mathrm d t + \|u_0\|^2.
\end{align*}
For the case $\sigma = 0$ a similar bound can be derived where, however, the constant depends exponentially on the final time $T$. 
\end{theorem}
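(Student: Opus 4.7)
The plan is to use Lemma~\ref{lem:Bvv_SD} as the core coercivity estimate, test the variational equation against $w = U$ itself, and bound the linear right-hand side via Cauchy--Schwarz and weighted Young inequalities so as to absorb every occurrence of $U$ back into the left-hand side.

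Since \eqref{eq:weak_eqn} is a square linear system on the finite-dimensional space $V_{L,\mathcal I}$, existence reduces to uniqueness. If $A_{\mathcal I}^{(\delta)}(U,w)=0$ for every $w\in V_{L,\mathcal I}$, choosing $w=U$ and applying Lemma~\ref{lem:Bvv_SD} forces $\vvvert U\vvvert_\delta=0$. The streamline diffusion norm controls the initial trace $\|U_0^+\|$ and, on every strip, the streamline derivative $\partial_t U+\partial_\bbeta U$ together with the interior jumps; integration along characteristics on each $I_j$ then yields $U\equiv 0$, proving uniqueness and hence solvability.

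For the stability bound when $\sigma>0$, I test \eqref{eq:weak_eqn} with $w=U$ to obtain $\tfrac{1}{2}\vvvert U\vvvert_\delta^2\le A_{\mathcal I}^{(\delta)}(U,U)=B_{\mathcal I}^{(\delta)}(u_0,f,g;U)$ and estimate each summand of $B_{\mathcal I}^{(\delta)}$ separately. Splitting $(f,U+\delta(\partial_t U+\partial_\bbeta U))$ into two separate inner products and applying Young's inequality with weights tuned so that each absorbed $U$-piece costs at most one quarter of the corresponding piece of $\vvvert U\vvvert_\delta^2$ ($\|U\|^2$ against the $\sigma$-weighted part, $\|\partial_t U+\partial_\bbeta U\|^2$ against the $\delta$-weighted part, $|U|^2_{\Gammarm}$ against $|U|^2_{\partial\Omega}$, and $\|U_0^+\|^2$ against itself), the sum of $U$-dependent contributions is bounded by $\tfrac{1}{4}\vvvert U\vvvert_\delta^2$. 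Combined with the lower bound of Lemma~\ref{lem:Bvv_SD} (which requires $\delta<1/\sigma$), this leaves $\tfrac{1}{4}\vvvert U\vvvert_\delta^2$ on the left against the data terms $\sum_j\int_{I_j}(\sigma^{-1}\|f\|^2+|g|^2_{\Gammarm})\,dt+\|u_0\|^2$ on the right, yielding the claim.

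The main obstacle is the case $\sigma=0$, where $\int\|U\|^2\,dt$ is no longer controlled by $\vvvert U\vvvert_\delta^2$ and $(f,U)$ cannot be absorbed directly. Here my plan is to argue strip by strip from \eqref{eq:weak_form_j}: on each $I_j$ the analogue of Lemma~\ref{lem:Bvv_SD} provides a local coercivity, while $\int_{I_j}\|U\|^2\,dt$ is controlled by $\|U(t_{j-1}^+)\|^2$ plus a streamline-derivative contribution, obtained from the identity $\tfrac{d}{dt}\|U\|^2=2(U,\partial_t U+\partial_\bbeta U)-|U|^2_{\Gammarp}+|U|^2_{\Gammarm}$ and a Grönwall step. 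The resulting recursion $\|U_j^-\|^2\le C\int_{I_j}(\|f\|^2+|g|^2_{\Gammarm})\,dt+(1+C|I_j|)\|U_{j-1}^-\|^2$ iterates to a factor $\prod_j(1+C|I_j|)\le e^{CT}$, which is precisely the announced exponential dependence on the final time $T$.
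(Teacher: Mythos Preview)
Your argument for $\sigma>0$ is essentially the paper's: test with $w=U$, invoke Lemma~\ref{lem:Bvv_SD} for coercivity, and absorb via weighted Young inequalities. The paper phrases the absorption as $B_{\mathcal I}^{(\delta)}(u_0,f,g;U)\le \tfrac12 A_{\mathcal I}^{(\delta)}(U,U)+\text{data}$ rather than $\tfrac14\vvvert U\vvvert_\delta^2+\text{data}$, but by Lemma~\ref{lem:Bvv_SD} these are equivalent and the constants match.

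Where you genuinely diverge is the case $\sigma=0$. Instead of your strip-by-strip Gr\"onwall recursion, the paper simply changes the unknown to $w(t,\cdot)=e^{-\alpha t}u(t,\cdot)$ for some $\alpha>0$; this introduces an artificial zeroth-order term with coefficient $\alpha$, so the problem for $w$ falls under the $\sigma>0$ case already proved, and transforming back produces the factor $e^{\alpha T}$. This is shorter and avoids the bookkeeping of iterating estimates across strips and tracking the interaction of $\delta$, $|I_j|$, and the constants in your Gr\"onwall step. Your route is viable and yields the same qualitative dependence on $T$, but it requires more care: the bound $\int_{I_j}\|U\|^2\,dt\lesssim |I_j|\,\vvvert U\vvvert_{\delta,j}^2$ you need implicitly couples $|I_j|$ and $\delta$ (through the streamline-derivative term), and your phrase ``integration along characteristics'' for the uniqueness step should really be the energy identity $\tfrac{d}{dt}\|U\|^2=-|U|^2_{\Gammarp}+|U|^2_{\Gammarm}$ combined with $|U|_{\partial\Omega}=0$, since $U$ lives only in $H^1$.
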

\begin{proof}
For $\sigma > 0$ applying Cauchy and Young's inequality on \eqref{eq:BI} gives
\begin{align*}
& B^{(\delta)}(u_0, f,g; v) \\ 
&\quad	\le \sum_{j=1}^N \int_{I_j} (\frac{1}{\sigma} + \delta) \|f\|^2 
		+ \frac{\sigma}{4} \|v\|^2 + \frac{\delta}{4} \|\partial_t v + \partial_\bbeta v\|^2
		+ \vert g\vert^2_{\Gammax} + \frac{1}{4} \vert v\vert^2_{\Gammarm} \, \mathrm d t \\
&\quad \quad
	    + \|u_0\|^2 + \frac{1}{4} \|v_0^+\|^2 \\
&\quad \le \frac 1 2 A_{\mathcal I}^{(\delta)}(v,v) + \sum_{j=1}^N \int_{I_j} (\frac{1}{\sigma} + \delta) \|f\|^2 
		+ \vert g\vert^2_{\Gammarm}  \, \mathrm d t + \|u_0\|^2, 
\end{align*}
where Lemma \ref{lem:Bvv_SD} was used in the last step. The solution $U$ satisfies
\begin{align*}
 A_{\mathcal I}^{(\delta)}(U,U) &= B_{\mathcal I}^{(\delta)}(u_0, f,g; U) \\
&\le \frac 1 2 A_{\mathcal I}^{(\delta)}(U,U) + \sum_{j=1}^N \int_{I_j} (\frac{1}{\sigma} + \delta) \|f\|^2 
		+ \vert g\vert^2_{\Gammarm}  \, \mathrm d t + \|u_0\|^2.
\end{align*}
Bringing $A_{\mathcal I}^{(\delta)}(U,U)$ to the left side and using \eqref{eq:norm_equivalence} gives the inequality which also shows the existence and uniqueness of the solution. 

For the case $\sigma =0$ one can introduce a positive constant term by change of the unknown to $w(t,\cdot) = \mathrm e^{-\alpha t} \, u(t,\cdot)$ which leads to an exponential factor $\mathrm e^{\alpha T}$ in the bound \cite{johnson1984}. 
\end{proof}

\subsection{Approximation}

In the following we will derive error estimates for the orthogonal projection onto $V_{L,\mathcal I}$, which will be used in the convergence proof in Sect.\ \ref{sub:convergence}. For the result we will combine estimates for sparse grids and well known approximation results from one-dimensional finite element discretization for the time domain. 

Following \cite{griebel2014} we assume that the approximation property 
\begin{align*}
\inf_{v_\ell \in \Vi_\ell} \| u - v_\ell \|_{H^q(\Omegai)} 
	\lesssim h_\ell^{s-q} \|u\|_{H^s(\Omegai)}, \quad u \in H^s(\Omegai), \quad i=1,2 
\end{align*}
holds for $q<\gamma$, $q\le s \le r+1$ uniformly in the level $\ell$  (note that we replaced $r$ by $r+1$). Here $h_\ell = 2^{-\ell}$ and 
\begin{align*}
\gamma = \sup\big\{s\in \mathbb R\,\vert \, \Vx_\ell \subset H^s(\Omegax), \, \Vv_\ell \subset H^s(\Omegav) \big\}.
\end{align*} 
Define for $s>0$ the Sobolev spaces of mixed order \cite{griebel2000}
\begin{align*}
H^{s}_{\mathrm{mix}}(\Omega) = H^{s,s}_{\mathrm{mix}}(\Omega), \quad
	H^{s_1,s_2}_{\mathrm{mix}}(\Omega)= H^{s_1}(\Omega^{(1)}) \otimes H^{s_2}(\Omega^{(2)}) 
\end{align*}
and denote the corresponding norm by $\|\cdot\|_{H^{s}_{\mathrm{mix}}}$. Then the following approximation results hold.
\begin{theorem}[\cite{griebel2014}] \label{thm:griebel}
Let $0<s\le r$ and denote by $P_L$ the $L^2$-orthogonal projection onto $V_L$. Then 
\begin{align*}
\| u- P_L u\|_{L^2(\Omega)} &\lesssim 2^{-(s+1) L} \sqrt{L}\| u \|_{H^{s+1}_\mathrm{mix}(\Omega)} \\
\| u- P_L u\|_{H^1(\Omega)} &\lesssim 2^{-s L} \sqrt{L} \| u \|_{H^{s+1}_\mathrm{mix}(\Omega)}
\end{align*}
if $u \in H^{s+1}_{\mathrm{mix}}(\Omega)$.
\end{theorem}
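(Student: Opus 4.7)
The plan is to reduce the $L^2$-orthogonal projection error on the non-orthogonal sum $V_L = \sum_{|\bell|_1\le L+1} \Vx_{\ell_1}\otimes \Vv_{\ell_2}$ to a sum of tensor-product increment errors on the omitted index set. To that end I would first introduce the one-dimensional hierarchical increments $Q_\ell^{(i)} = P_\ell^{(i)} - P_{\ell-1}^{(i)}$ for $\ell\ge 1$ (with $P_0^{(i)}=0$), where $P_\ell^{(i)}$ is the $L^2$-orthogonal projection onto $\Vi_\ell$. Because the $\Vi_\ell$ are nested, the $Q_\ell^{(i)}$ are mutually $L^2$-orthogonal and sum to the identity on $L^2(\Omega^{(i)})$, giving the decomposition $u = \sum_{\ell_1,\ell_2 \ge 1} Q_{\ell_1}^{(1)}\otimes Q_{\ell_2}^{(2)} u$ in $L^2(\Omega)$.

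Next I would introduce the auxiliary operator $\widetilde P_L = \sum_{|\bell|_1\le L+1} Q_{\ell_1}^{(1)}\otimes Q_{\ell_2}^{(2)}$. Since $Q_{\ell_1}^{(1)}\otimes Q_{\ell_2}^{(2)}$ maps into $\Vx_{\ell_1}\otimes \Vv_{\ell_2}\subset V_L$, we have $\widetilde P_L u\in V_L$, and by $L^2$-best approximation $\|u-P_L u\|_{L^2}\le \|u-\widetilde P_L u\|_{L^2}$. Applying the assumed one-dimensional approximation property with $q=0$ to each increment, i.e.\ $\|Q_\ell^{(i)} v\|_{L^2(\Omega^{(i)})} \lesssim h_\ell^{s+1}\|v\|_{H^{s+1}(\Omega^{(i)})}$, and using Fubini on the tensor product yields
\begin{align*}
\|Q_{\ell_1}^{(1)}\otimes Q_{\ell_2}^{(2)} u\|_{L^2(\Omega)} \lesssim 2^{-(s+1)(\ell_1+\ell_2)}\|u\|_{H^{s+1}_\mathrm{mix}(\Omega)}.
\end{align*}
The tensor increments are mutually $L^2$-orthogonal, so Parseval gives
\begin{align*}
\|u-\widetilde P_L u\|_{L^2}^2 = \sum_{|\bell|_1>L+1}\|Q_{\ell_1}^{(1)}\otimes Q_{\ell_2}^{(2)} u\|_{L^2}^2 \lesssim \sum_{k>L+1} k\cdot 2^{-2(s+1)k}\,\|u\|_{H^{s+1}_\mathrm{mix}}^2,
\end{align*}
where the factor $k$ counts the $O(k)$ admissible pairs $(\ell_1,\ell_2)$ with $\ell_1+\ell_2=k$. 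The geometric series is dominated by its first term, producing $L\cdot 2^{-2(s+1)L}\|u\|_{H^{s+1}_\mathrm{mix}}^2$, and taking square roots yields the stated $L^2$-bound with the $\sqrt{L}$ factor.

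For the $H^1$ estimate I would repeat the same splitting with $\|\nabla u\|_{L^2}^2 = \|\nabla_{\bx} u\|^2_{L^2} + \|\nabla_{\bv} u\|^2_{L^2}$, using the one-dimensional estimate with $q=1$ in the factor whose gradient is being taken, i.e.\ $\|Q_\ell^{(i)} v\|_{H^1(\Omega^{(i)})} \lesssim h_\ell^{s}\|v\|_{H^{s+1}(\Omega^{(i)})}$, and with $q=0$ in the other factor. This gives a tensor-increment bound of order $2^{-s\ell_i}2^{-(s+1)\ell_j}$ for each gradient component. The main obstacle is that differentiation breaks the $L^2$-orthogonality of the increments, so Parseval is replaced by Cauchy--Schwarz; the level-counting then produces the expected $\sqrt{L}$ factor, and evaluating the resulting geometric sum over $|\bell|_1>L+1$ gives the $2^{-sL}\sqrt{L}$ rate. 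The condition $s\le r$ together with $q<\gamma$ ensures that the one-dimensional estimates with $q=0,1$ are admissible throughout. Verifying the $H^1$-step carefully --- in particular that the Cauchy--Schwarz exchange does not lose more than the single $\sqrt{L}$ --- is the delicate part of the argument.
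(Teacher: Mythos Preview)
The paper does not give a self-contained argument here: its proof consists of invoking \cite[Thm.~1]{griebel2014} with the parameter choice $n_1=n_2=d$, $\sigma=1$, together with Examples~1 and~2 of that reference. Your proposal instead reconstructs the underlying machinery---the hierarchical increments $Q_\ell^{(i)}=P_\ell^{(i)}-P_{\ell-1}^{(i)}$, the tensor-detail bounds, and the geometric summation over $|\bell|_1>L+1$---which is precisely the sparse-grid approximation argument that \cite{griebel2014} packages into a citable theorem. So the two ``proofs'' are not really comparable: yours is the content behind the citation.

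Your $L^2$-argument is essentially complete. For the $H^1$-step there is one point you glossed over that must be made explicit. You introduced $\widetilde P_L$ as an auxiliary operator and passed from $P_L$ to $\widetilde P_L$ in $L^2$ via best approximation; that trick does \emph{not} carry over to $H^1$, since $P_L$ is only the $L^2$-projection. What saves the argument is that in fact $\widetilde P_L=P_L$: the detail spaces $W_\ell^{(i)}=\mathrm{ran}\,Q_\ell^{(i)}$ are $L^2$-orthogonal, so $V_L=\bigoplus_{|\bell|_1\le L+1}W_{\ell_1}^{(1)}\otimes W_{\ell_2}^{(2)}$ and your ``auxiliary'' projector is already the $L^2$-orthogonal one. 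State this identity before the $H^1$ computation. A second, more technical point: the bound $\|Q_\ell^{(i)}v\|_{H^1}\lesssim h_\ell^{s}\|v\|_{H^{s+1}}$ does not follow from the best-approximation hypothesis alone---it requires $H^1$-stability of the factorwise $L^2$-projections $P_\ell^{(i)}$, which holds on quasi-uniform meshes and is assumed implicitly in \cite{griebel2014}. Finally, your expectation that Cauchy--Schwarz contributes a $\sqrt{L}$ in the $H^1$-case is actually pessimistic: because the decay rate $2^{-s\ell_1-(s+1)\ell_2}$ is anisotropic, the diagonal sum $\sum_{\ell_1+\ell_2=k}2^{-s\ell_1-(s+1)\ell_2}\lesssim 2^{-sk}$ is bounded independently of $k$, so one obtains $2^{-sL}$ without a logarithm. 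The stated bound with $\sqrt{L}$ is of course still correct.
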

\begin{proof}
The inequalities directly follow from \cite[Thm.\ 1]{griebel2014} for the case $n_1=n_2=d$, $\sigma=1$,  and Examples 1 and 2. 
\end{proof}

Finally, define the space 
\begin{align*}
 \mathcal H^s(J\times \Omega) = H^s(J) \otimes L^2(\Omega) \cap L^2(J) \otimes H^{s}_{\mathrm{mix}}(\Omega)
\end{align*}
with classical regularity with respect to time and a mixed regularity with respect to $\bx$ and $\bv$. The corresponding norm reads
\begin{align*}
\|u\|_{\mathcal H^s}^2 = \int_0^T \|\partial_t^s u\|_{L^2(\Omega)}^2 + \|u\|_{H^{s}_{\mathrm{mix}}(\Omega)}^2 \, \mathrm d t.
\end{align*}
With this type of regularity we can show the following approximation result.

\begin{theorem} \label{thm:approximation}
Let $L\in \mathbb N$ and let $\mathcal I$ be a time partition such that $\vert \mathcal I\vert  \sim h = 2^{-L}$. For $u \in L^2(J\times \Omega)$ define 
\[ u_h = (P_{\mathcal I} \otimes P_L)\, u, \]
where $P_{\mathcal I}$ is the $L^2$-orthogonal projection onto the continuous piecewise polynomials of degree $r$ with respect to the partition $\mathcal I$. For $0 < s \le r$ and $u\in \mathcal H^{s+1}$
\begin{align*}
\| u - u_h \|_{L^2(J\times \Omega)}  
	&\lesssim 2^{-(s+1)L} \sqrt{L} \, \| u\|_{\mathcal H^{s+1}} \sim h^{s+1} \, (\ln h)^{1/2} \, \| u\|_{\mathcal H^{s+1}}\,,\\
\| u - u_h \|_{H^1(J\times \Omega)}  	
	& \lesssim 2^{-s L} \sqrt{L} \, \| u\|_{\mathcal H^{s+1}} \sim h^{s} \, (\ln h)^{1/2} \, \| u\|_{\mathcal H^{s+1}}\,,
\end{align*} 
where the constants are independent of $L$ and $u$.
\end{theorem}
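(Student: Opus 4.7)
The plan is to reduce the joint space-time error to one-dimensional approximation problems by choosing splittings tailored to each estimate, using the preceding sparse-grid theorem for the spatial factor and classical $L^2$-projection estimates on continuous piecewise polynomials of degree $r$ on the quasi-uniform mesh for the time factor, together with the fact that $P_{\mathcal I}$ is an $L^2$-contraction in time and commutes with every spatial operator. The two natural splittings
\[
u-u_h = (I-P_{\mathcal I})u + P_{\mathcal I}(I-P_L)u = (I-P_L)u + (I-P_{\mathcal I})P_L u
\]
will be used selectively: the first is well-adapted to temporal derivatives, the second to spatial ones.

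For the $L^2$ and $\partial_t$ parts I would use the first splitting. Fubini applied to $(I-P_{\mathcal I})u$ together with the pointwise one-dimensional best-approximation estimate in $\br$ gives $h^{s+1}\|u\|_{H^{s+1}(J;L^2(\Omega))}$ in $L^2$ and, with one order lost to the derivative, $h^s\|u\|_{H^{s+1}(J;L^2)}$ for $\partial_t$. For $P_{\mathcal I}(I-P_L)u$, $L^2$-contractivity of $P_{\mathcal I}$ followed by the preceding theorem applied slice-wise in $t$ yields $2^{-(s+1)L}\sqrt L\,\|u\|_{L^2(J;H^{s+1}_{\mathrm{mix}})}$ in $L^2$; for $\partial_t$ the same term is piecewise polynomial in $t$, so a temporal inverse inequality contributes the compensating $h^{-1}$ needed to reach the $2^{-sL}\sqrt L$ rate. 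Combining contributions and using $h=2^{-L}$ gives the two claimed bounds except for the $\nabla_{\br}$ piece.

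For $\nabla_{\br}$ the first splitting fails, since $\nabla_{\br}(I-P_{\mathcal I})u=(I-P_{\mathcal I})\nabla_{\br}u$ appears to demand cross time-space regularity not controlled by $\mathcal H^{s+1}$. I would therefore switch to the second splitting: the pointwise-in-$t$ $H^1$ sparse-grid estimate from the preceding theorem directly bounds $\nabla_{\br}(I-P_L)u$ by $2^{-sL}\sqrt L\,\|u\|_{L^2(J;H^{s+1}_{\mathrm{mix}})}$, and since $\nabla_{\br}$ commutes with the time-only operator $P_{\mathcal I}$ the remaining term reads $(I-P_{\mathcal I})\nabla_{\br}P_L u$. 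I would bound it by combining the $h^{s+1}$ time best-approximation rate (applied to $\nabla_{\br}P_L u$, which is polynomial in $t$ of the same degree as $P_L u$) with a spatial inverse inequality on the discrete space $V_L$ that trades $\nabla_{\br}$ for a factor $h^{-1}$, and $L^2$-stability of $P_L$ to remove the projection. The main obstacle I anticipate is making this spatial inverse inequality on the sparse-grid space $V_L$ precise: on full tensor-product grids it is standard, but on sparse grids it only holds up to logarithmic factors, which would be absorbed into the $\sqrt L$ constant in the final bound.
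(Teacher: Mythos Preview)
Your approach is essentially the paper's: the same tensor splittings (the paper writes one three-term decomposition $(I-P_{\mathcal I})\otimes P_L + P_{\mathcal I}\otimes(I-P_L) + (I-P_{\mathcal I})\otimes(I-P_L)$ and estimates each norm component, which is equivalent to your alternating two-term splittings), the same one-dimensional estimates for $P_{\mathcal I}$, the preceding theorem for $P_L$, and a temporal and a spatial inverse inequality at the same places.

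Two small points. Your parenthetical ``which is polynomial in $t$ of the same degree as $P_L u$'' is wrong and also irrelevant: $P_L$ acts only in $\br$, so $P_L u$ inherits the full $t$-dependence of $u$; what makes the step work is simply that $P_L\partial_t^{s+1}u(t,\cdot)\in V_L$ for each $t$, so after commuting $\partial_t^{s+1}$ with $\nabla_{\br}P_L$ the spatial inverse inequality applies --- this is exactly what the paper does. Your worry about logarithmic losses in that inverse inequality is unfounded: because $V_L\subset \Vx_L\otimes \Vv_L$, the standard full-grid inverse estimate at level $L$ already gives $\|P_L\|_{L^2(\Omega)\to H^1(\Omega)}\lesssim 2^L$ with no log factor, which is precisely the bound the paper invokes.
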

\begin{proof}
For the proof we will use norms of the projection operators. For the time domain, the operator $P_{\mathcal I}$ satisfies
\[ \| v - P_{\mathcal I}v \|_{L^2(J)} \lesssim \vert \mathcal I\vert^{s+1} \|\partial_t^{s+1} v\|_{L^2(J)}, \quad
\| v - P_{\mathcal I}v \|_{H^1(J)} \lesssim \vert \mathcal I\vert^{s} \|\partial_t^{s+1} v\|_{L^2(J)} \]
for $v\in H^{s+1}(I)$. Hence, the norm corresponding operators can be bounded by
\[ \|I-P_{\mathcal I} \|_{H^{s+1}(J)\rightarrow L^2(J)} \lesssim 2^{-(s+1) L}, \quad
\|I-P_{\mathcal I} \|_{H^{s+1}(J)\rightarrow H^1(J)} \lesssim 2^{-sL}.\] 
For the projectors onto $V_L$ it follows from Thm.\ \ref{thm:griebel} that
\[ 
	\| I-P_L \|_{H_{\mathrm{mix}}^{s+1}(\Omega)\rightarrow L^2(\Omega)} \lesssim 2^{-(s+1) L} \sqrt{L}, \quad
	\| I-P_L \|_{H_{\mathrm{mix}}^{s+1}(\Omega)\rightarrow H^1(\Omega)} \lesssim 2^{-s L} \sqrt{L}.
\] 

Now, the projection error with respect to $L^2(J\times \Omega)$ can be estimated by 
\begin{align*}
&\| u - u_h \|_{L^2(J \times \Omega)} \\
&\quad = \| (I-P_{\mathcal I}) \otimes P_L \, u + P_{\mathcal I} \otimes (I-P_L) \, u + (I-P_{\mathcal I}) \otimes (I-P_L) u\|_{L^2(J \times \Omega)} \\
& \quad \le \|I-P_{\mathcal I} \|_{H^{s+1}(J)\rightarrow L^2(J)} \, \| P_L \|_{L^2(\Omega)\rightarrow L^2(\Omega)} \| u\|_{H^{s+1}(J) \otimes L^2(\Omega)} \\
& \quad \quad + \| P_{\mathcal I} \|_{L^2(J)\rightarrow L^2(J)} \, \| I-P_L \|_{H_{\mathrm{mix}}^{s+1}(\Omega)\rightarrow L^2(\Omega)} \| u\|_{L^2(J) \otimes H^{s+1}_{\mathrm{mix}}(\Omega)} \\
& \quad \quad + \| I-P_{\mathcal I} \|_{H^{s+1}(J)\rightarrow L^2(J)} \, \| I-P_L \|_{L^2(\Omega)\rightarrow L^2(\Omega)} \| u\|_{H^{s+1}(J) \otimes L^2(\Omega)}.
\end{align*}
Using the bounds of the projection operators and the fact that
\[  \| P_{\mathcal I} \|_{L^2(J)\rightarrow L^2(J)}, \, \| P_L \|_{L^2(\Omega)\rightarrow L^2(\Omega)}, \, \| I - P_L \|_{L^2(\Omega)\rightarrow L^2(\Omega)} \lesssim 1 \]
due to the orthogonality of the projectors it follows that
\begin{align*}
\| u - u_h \|_{L^2(J \times \Omega)} 
& \lesssim  2^{-(s+1) L} \| u\|_{H^{s+1}(J) \otimes L^2(\Omega)}
	+ 2^{-(s+1) L} \sqrt{L} \| u\|_{L^2(J) \otimes H^{s+1}_{\mathrm{mix}}(\Omega)} \\
& \quad + 2^{-(s+1) L} \| u\|_{H^{s+1}(J) \otimes L^2(\Omega)} \\
& \lesssim 2^{-(s+1)L} \sqrt{L} \| u\|_{\mathcal H^{s+1}}.
\end{align*}
 
For the second inequality of the theorem note that
\[ H^1(J\times \Omega) = 
	H^1(J) \otimes L^2(\Omega)
	\cap L^2(J) \otimes H^{1,0}_{\mathrm{mix}}(\Omega)
	\cap L^2(J) \otimes H^{0,1}_{\mathrm{mix}}(\Omega) \]
We estimate each contribution like in the previous case:
\begin{align*}
&\| u - u_h \|_{H^1(J) \otimes L^2(\Omega)} \\
&\quad  \le \|I-P_{\mathcal I} \|_{H^{s+1}(J)\rightarrow H^1(J)} \, \| P_L \|_{L^2(\Omega)\rightarrow L^2(\Omega)} \| u\|_{H^{s+1}(J) \otimes L^2(\Omega)} \\
& \quad \quad + \| P_{\mathcal I} \|_{L^2(J)\rightarrow H^1(J)} \, \| I-P_L \|_{H_{\mathrm{mix}}^{s+1}(\Omega)\rightarrow L^2(\Omega)} \| u\|_{L^2(J) \otimes H^{s+1}_{\mathrm{mix}}(\Omega)} \\
& \quad \quad + \| I-P_{\mathcal I} \|_{H^{s+1}(J)\rightarrow H^1(J)} \, \| I-P_L \|_{L^2(\Omega)\rightarrow L^2(\Omega)} \| u\|_{H^{s+1}(J) \otimes L^2(\Omega)} 
\end{align*}
and use the inverse inequality of $P_{\mathcal I}$ to bound $\|P_{\mathcal I} \|_{L^2(J)\rightarrow H^1(J)} \lesssim \vert \mathcal I\vert^{-1}\sim 2^{L}$ which leads to
\begin{align*}
\| u - u_h \|_{H^1(J) \otimes L^2(\Omega)} \lesssim 2^{-s L} \sqrt{L}\, \|u\|_{\mathcal H^{s+1}}.
\end{align*}
In the same manner
\begin{align*}
&\| u - u_h \|_{L^2(J) \otimes H^{1,0}_{\mathrm{mix}}(\Omega)} \\
&\quad \le \| (I-P_{\mathcal I}) \|_{H^{s+1}(J)\rightarrow L^2(J)} \, \| P_L \|_{L^2(\Omega) \rightarrow H_{\mathrm{mix}}^{1,0}(\Omega)} \| u\|_{H^{s+1}(J) \otimes L^2(\Omega)} \\
&\quad \quad + \| P_{\mathcal I} \|_{L^2(J)\rightarrow L^2(J)} \, \| I-P_L \|_{H_{\mathrm{mix}}^{s+1}(\Omega)\rightarrow H^1(\Omega)} \| u\|_{L^2(J) \otimes H^{s+1}_{\mathrm{mix}}(\Omega)} \\
&\quad \quad + \| I-P_{\mathcal I} \|_{L^2(J)\rightarrow L^2(J)} \, \| I-P_L \|_{H^{s+1}_{\mathrm{mix}}(\Omega)\rightarrow H^{1,0}_{\mathrm{mix}}(\Omega)}\| u\|_{L^2(J) \otimes H^{s+1}_{\mathrm{mix}}(\Omega)} \\
&\quad \lesssim 2^{-s L} \sqrt{L} \| u\|_{\mathcal H^{s+1}}.
\end{align*}
Here we use the inverse inequality $\| P_L \|_{L^2(\Omega) \rightarrow H^1(\Omega)} \lesssim 2^{L}$. The last norm with respect to $L^2(J) \otimes H^{0,1}_{\mathrm{mix}}(\Omega)$ is treated analogously. Summing up all contributions proves the second inequality. 
\end{proof}

\subsection{Convergence \label{sub:convergence}}

Using the approximation result from the previous section we can prove the convergence of the discrete solution.
\begin{theorem} \label{thm:convergence}
Let $u$ be a solution of the transport problem \eqref{eq:transport} and \eqref{eq:inflow} in the case of constant coefficients and $\sigma \ge 0$ and assume that $u \in \mathcal H^{s+1}$ for $0<s\le r$. Let $U$ be the solution of the discrete equations \eqref{eq:weak_eqn} for level $L$, where $\vert \mathcal I\vert  \sim h = 2^{-L}$ and $\delta \sim h$. Then 
\begin{align*}
\vvvert U-u \vvvert_{\delta} \lesssim h^{s+1/2} \, (\ln h)^{1/2} \, \|u\|_{\mathcal H^{s+1}}.
\end{align*}
\end{theorem}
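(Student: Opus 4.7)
The plan is the standard streamline-diffusion convergence argument: Galerkin orthogonality, coercivity in the SD-norm via Lemma \ref{lem:Bvv_SD}, and the mixed-regularity projection bounds from Theorem \ref{thm:approximation}. Since $u$ is a smooth solution of \eqref{eq:transport}--\eqref{eq:inflow}, multiplying the PDE by $w+\delta(\partial_t w+\partial_\bbeta w)$ on each time strip, integrating by parts and using the inflow data $u|_{\Gammarm}=g$ together with the continuity $u_{j-1}^+=u_{j-1}^-$, one checks that the exact solution satisfies the space--time weak form $A_{\mathcal I}^{(\delta)}(u,w)=B_{\mathcal I}^{(\delta)}(u_0,f,g;w)$ for every $w\in V_{L,\mathcal I}$. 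Subtracting \eqref{eq:weak_eqn} yields Galerkin orthogonality $A_{\mathcal I}^{(\delta)}(U-u,w)=0$. Setting $u_h:=(P_{\mathcal I}\otimes P_L)u$, $\eta:=u-u_h$ and $e_h:=U-u_h\in V_{L,\mathcal I}$, Lemma \ref{lem:Bvv_SD} applied strip-by-strip gives
\[ \tfrac12\vvvert e_h\vvvert_\delta^2 \le A_{\mathcal I}^{(\delta)}(e_h,e_h)=A_{\mathcal I}^{(\delta)}(\eta,e_h), \]
and the task is reduced to estimating $A_{\mathcal I}^{(\delta)}(\eta,e_h)$.

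Next I would bound $A_{\mathcal I}^{(\delta)}(\eta,e_h)$ term by term by Cauchy--Schwarz and Young's inequality, with weights chosen so that every $e_h$-contribution can be absorbed as $\tfrac14\vvvert e_h\vvvert_\delta^2$. The symmetric pairings $(\delta(\partial_t\eta+\partial_\bbeta\eta),\partial_t e_h+\partial_\bbeta e_h)$, $(\sigma\eta,e_h)$, $\langle\eta,e_h\rangle_{\Gammarm}$, the jump couplings $([\eta]_j,e_{h,j}^+)$ and the initial coupling $(\eta_0^+,e_{h,0}^+)$ factor cleanly into the corresponding summands of $\vvvert\eta\vvvert_\delta$ and $\vvvert e_h\vvvert_\delta$. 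The delicate contribution is $(\partial_t\eta+\partial_\bbeta\eta,e_h)$, which is not paired with a natural $\vvvert e_h\vvvert_\delta$-summand; I would split it as $\delta^{-1/2}\|\eta\|_{L^2}\cdot\delta^{1/2}\|\partial_t e_h+\partial_\bbeta e_h\|_{L^2}$, producing a residual $\delta^{-1}\|\eta\|_{L^2(J\times\Omega)}^2$. By Theorem \ref{thm:approximation} and the choice $\delta\sim h=2^{-L}$, this residual is bounded by $h^{-1}\cdot h^{2(s+1)}L=h^{2s+1}L\,\|u\|_{\mathcal H^{s+1}}^2$. The mixed term $\delta\sigma(\eta,\partial_t e_h+\partial_\bbeta e_h)$ is controlled analogously, using $\delta<1/\sigma$. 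Collecting all contributions and absorbing yields $\vvvert e_h\vvvert_\delta^2\lesssim h^{2s+1}L\,\|u\|_{\mathcal H^{s+1}}^2$.

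The theorem then follows from the triangle inequality $\vvvert U-u\vvvert_\delta\le\vvvert e_h\vvvert_\delta+\vvvert\eta\vvvert_\delta$ once $\vvvert\eta\vvvert_\delta$ is controlled with the same rate. This is done by feeding Theorem \ref{thm:approximation} into each summand of the SD-norm \eqref{eq:norm_SD}: the $\sigma\|\eta\|^2$ contribution gives $h^{2(s+1)}L$, the $\delta\|\partial_t\eta+\partial_\bbeta\eta\|^2$ contribution gives $h\cdot h^{2s}L=h^{2s+1}L$, and the trace-type terms $|\eta|_{\partial\Omega}^2$, $\|[\eta]_j\|^2$, $\|\eta_0^+\|^2$, $\|\eta_N^-\|^2$ each give $h^{2s+1}L$ after a multiplicative trace inequality of the form $\|\eta\|_{L^2(\partial\Omega)}^2\lesssim\|\eta\|_{L^2(\Omega)}\|\eta\|_{H^1(\Omega)}$ (and the analogous time-trace on each $I_j$ for the jumps and endpoints). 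Taking square roots produces the stated $h^{s+1/2}(\ln h)^{1/2}$ rate. For $\sigma=0$ the change of variable $w=e^{-\alpha t}u$ from Johnson's argument gives the same estimate with a factor $e^{\alpha T}$, exactly as in the stability theorem.

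The main obstacle I expect is the bookkeeping around the trace-type contributions to $\vvvert\eta\vvvert_\delta$: Theorem \ref{thm:approximation} only provides $L^2$- and $H^1$-estimates, whereas $|\eta|_{\partial\Omega}$ and the temporal jumps/endpoints are boundary seminorms that are not covered directly. The multiplicative trace inequality must be applied in a way compatible with the multilevel decomposition \eqref{eq:sg_space}, since $P_L$ is not uniformly stable in $H^1$ with a full-grid constant but only with the sparse-grid $\sqrt{L}$ loss. It is precisely the interaction between this logarithmic loss and the intrinsic half-order loss of the Johnson-type SD analysis (balancing $\delta\sim h$) that produces the final rate $h^{s+1/2}(\ln h)^{1/2}$, and verifying that no additional factors creep in during the trace step is the technically most delicate part of the proof.
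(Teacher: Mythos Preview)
Your overall architecture matches the paper exactly: decompose the error as $U-u=e_h-\eta$ with $\eta=u-u_h$, $u_h=(P_{\mathcal I}\otimes P_L)u$, use Lemma~\ref{lem:Bvv_SD} and Galerkin orthogonality to get $\vvvert e_h\vvvert_\delta^2\lesssim A_{\mathcal I}^{(\delta)}(\eta,e_h)$, bound this by Young's inequality, and finish with Theorem~\ref{thm:approximation} and a multiplicative trace inequality on each strip $I_j\times\Omega$.

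There is, however, one genuine gap. Your proposed ``split'' of the convection term,
\[
\big|(\partial_t\eta+\partial_\bbeta\eta,\,e_h)\big|\ \le\ \|\eta\|_{L^2}\,\|\partial_t e_h+\partial_\bbeta e_h\|_{L^2},
\]
is not a Cauchy--Schwarz inequality: the derivatives sit on $\eta$, not on $e_h$. To move them over you must integrate by parts in $t$ and in $\br$, and this creates new boundary terms on $\Gammarp$ and new temporal endpoint terms at each $t_j$ that you have not accounted for. The paper handles this systematically by evaluating $A_{\mathcal I}^{(\delta)}(\eta,e_h)$ not from the defining formula \eqref{eq:AI} but from its integrated-by-parts version \eqref{eq:AI_partint}. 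In that representation the troublesome term is already $(-\eta,\partial_t e_h+\partial_\bbeta e_h)$, and the compensating terms are explicit: $\langle\eta,e_h\rangle_{\Gammarp}$, $\sum_j(\eta_j^-,-[e_h]_j)$, and $(\eta_N^-,e_{h,N}^-)$. Each of these is then bounded by Young's inequality and absorbed into $\tfrac12 A_{\mathcal I}^{(\delta)}(e_h,e_h)$; the $\eta$-residuals left over are $|\eta|_{\Gammarp}^2$ and $\sum_j\|\eta_j\|^2$, which are controlled by the \emph{same} multiplicative trace inequality on $I_j\times\Omega$ that you already planned to use for $\vvvert\eta\vvvert_\delta$. (Note also that $\eta$ is continuous in time because $P_{\mathcal I}$ lands in continuous piecewise polynomials, so $[\eta]_j=0$ and the jump couplings in the original form \eqref{eq:AI} vanish---the nontrivial time-interface terms really do come from \eqref{eq:AI_partint}.)

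Two small remarks. First, your worry about the trace step interacting badly with the sparse-grid $\sqrt L$ loss is unfounded: the trace inequality is applied to $\eta$ as an element of $H^1(I_j\times\Omega)$, and the necessary $L^2$- and $H^1$-bounds on $\eta$ with the correct logarithmic factor are supplied directly by Theorem~\ref{thm:approximation}; no separate stability of $P_L$ in $H^1$ is needed. Second, the change of variable $w=e^{-\alpha t}u$ is only needed for the stability theorem; the convergence proof works uniformly for $\sigma\ge 0$ since Lemma~\ref{lem:Bvv_SD} covers that case.
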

\begin{proof}
The proof follows along the lines of \cite[Thm.\ 13.7]{larsson2005} and \cite{schwab2008}. Decompose the error
\[ e = U - u = (U - u_h) + (u_h - u) = \theta + \rho, \quad u_h = (P_{\mathcal I} \otimes P_L) u. \]
The streamline diffusion norm can be estimated by
\begin{align}\label{eq:error_decomp}
\vvvert e \vvvert_{\delta} \le \vvvert \theta \vvvert_{\delta} + \vvvert \rho \vvvert_{\delta}.
\end{align}
For the first term note that
\[ \vvvert \theta \vvvert_{\delta}^2 \lesssim A_{\mathcal I}^{(\delta)}(\theta, \theta) = -A_{\mathcal I}^{(\delta)}(\rho, \theta),\]
where the Galerkin orthogonality has been used. Applying \eqref{eq:AI_partint} for the right hand side, using Young's inequality and Lemma \ref{lem:Bvv_SD} gives the estimate
\begin{align*}
& \vert A_{\mathcal I}^{(\delta)}(\rho, \theta)\vert  \\
&\quad \le \sum_{j=1}^{N} \int_{I_j} 2 \delta^{-1} \|\rho\|^2 + \frac{\delta}{8} \| \partial_t \theta + \partial_\bbeta \theta \|^2 
		+ \sigma \|\rho\|^2 + \frac{\sigma}{4} \|\theta\|^2 
		 + \vert \rho\vert_{\Gammarp}^2 + \frac 1 4 \vert \theta\vert_{\Gammarp}^2 \\
	& \quad \quad \quad \quad
		 + 2 \delta \|\partial_t\rho + \partial_\bbeta \rho\|^2 + \frac{\delta}{8} \|\partial_t\theta + \partial_\bbeta \theta \|^2 
		 + 2 \delta \sigma^2 \| \rho\|^2 + \frac{\delta}{8} \| \partial_t \theta + \partial_\bbeta \theta\|^2 \, \mathrm d t\\
&\quad \quad  + \sum_{j=1}^{N} \|\rho_j\|^2 + \frac 1 4 \sum_{j=1}^{N-1} \| [\theta]_j\|^2 + \frac 1 4 \|\theta_N^-\|^2  \\
&\quad \le \frac{1}{2} A_{\mathcal I}^{(\delta)}(\theta,\theta) \\
&\quad \quad + C \big[ \sum_{j=1}^N \int_{I_j} (1+\delta^{-1}) \|\rho\|^2 + \delta \|\partial_t \rho + \partial_\bbeta \rho\|^2 + \vert \rho\vert_{\Gammarp}^2\, \mathrm dt
	+ \sum_{j=1}^N \|\rho_j\|^2 \big].
\end{align*}
Here we used the fact that $\rho$ is continuous in time. Hence it follows that
\begin{align*}
\vvvert \theta \vvvert_{\delta}^2 \lesssim \sum_{j=1}^N \int_{I_j} (1+\delta^{-1}) \|\rho\|^2 + \delta \|\partial_t \rho + \partial_\bbeta \rho\|^2 + \vert\rho\vert_{\Gammarp}^2\, \mathrm dt
	+ \sum_{j=1}^N \|\rho_j\|^2. 
\end{align*}
For the second term in \eqref{eq:error_decomp}
\begin{align*}
\vvvert \rho \vvvert_{\delta}^2 \lesssim 
\sum_{j=1}^N \int_{I_j}  \|\rho\|^2 + \delta \| \partial_t \rho + \partial_\bbeta \rho\|^2 + \vert \rho\vert^2_{\partial\Omega} \, \mathrm d t  
	+ \| \rho_0\|^2 + \| \rho_N\|^2,
\end{align*}
where again the continuity of $\rho$ was used. Applying the trace inequality on each strip $I_j\times \Omega$ gives
\begin{align*}
\| \rho_{j-1} \|^2 +  \| \rho_j \|^2 + \int_{I_j} \vert \rho\vert_{\partial \Omega}^2 \, \mathrm d t 
&\lesssim \| \rho\|_{L^2(I_j\times \Omega)} \cdot \| \rho\|_{H^1(I_j\times \Omega)} \\
&\lesssim \delta^{-1} \| \rho\|_{L^2(I_j\times \Omega)}^2 + \delta  \| \rho\|_{H^1(I_j\times \Omega)}^2.
\end{align*}
Using 
\[ \int_{I_j} \|\partial_t \rho +  \partial_\bbeta \rho\|^2\, \mathrm d t \lesssim \| \rho\|_{H^1(I_j\times \Omega)}^2,\] 
summing up and applying Thm.\ \ref{thm:approximation} gives
\begin{align*}
\vvvert e \vvvert_{\delta}^2 \lesssim 
	\big(1 + \delta^{-1}\big) \|\rho\|_{L^2(J\times \Omega)}^2 + \delta \|\rho\|_{H^1(J\times \Omega)}^2
\lesssim h_L^{2s+1} \, \ln h_L \, \| u\|_{\mathcal H^{s+1}}^2
\end{align*}
which completes the proof. 
\end{proof}

\section{Numerical Algorithm \label{sec:algorithm}}

In this section we describe how the discrete equation \eqref{eq:weak_form_j} on each time strip $I_j$, $j=1,\ldots,N$ is solved. Now we allow the coefficients $\bbeta, \, \sigma$ to be functions with sufficient regularity. The aim is to use data structures which enable the efficient representation of the sparse grid functions as well as efficient calculations. For that purpose, we use standard finite element libraries and exploit the tensor product structure of the problem. Therefore we will concentrate on coefficient functions which exhibit a tensor product structure.

\subsection{Sparse Grid Representation \label{sub:representation}}

Traditionally sparse grid spaces are built on multilevel decomposition and require special basis functions resulting in quite sophisticated operations in Galerkin schemes. This problem can be overcome by representing sparse grid functions as a combination of anisotropic full grid spaces and performing all the computations on these regular spaces. For sparse grids based on finite element spaces this approach has been used in \cite{griebel2014,harbrecht2008}, for example. This idea is closely connected with the sparse grid combination technique introduced in \cite{griebel1992}, see also \cite{garcke2012}. 

Here in this section, the focus is on the representation of sparse grid functions. In contrast to the combination technique, we will continue to solve the problem on the whole space $V_L$ and not only combining the solutions of the anisotropic spaces. However, later in Section \ref{sub:solver} we will use the combination technique as a preconditioner for the sparse grid system.

The key is the representation of the space $V_L$ in \eqref{eq:sg_space} as a sum of anisotropic full grid spaces $\Vx_{\ell_1} \otimes \Vv_{\ell_2}$ with $\ell_1 + \ell_2 = L+1$. Note that this is not a direct sum. For representing a function we use the finite element basis 
\begin{align*}
\Vi_\ell = \spn\big\{ \varphi^{(i)}_{\ell,k} \,\big\vert \, k = 1, \ldots, n^{(i)}_\ell\big\}, \quad n^{(i)}_\ell = \dim \Vi_\ell
\end{align*}
for $i=1,2$ and a basis for the polynomials on $I_j$:
\begin{align*}
\mathbb P^r(I_j,\BR) = \spn\{ \eta_s^{(j)} \,\vert \, s = 0, \ldots, r \}. 
\end{align*}
For notational simplicity we will omit the index $j$ in the following. 

Hence a function $U\in\mathbb P^r(I_j, V_L)$ can be written as
\begin{align*}
\begin{split}
U(t,\bx, \bv) = \sum_{\vert \bell\vert_1 = L+1} \, \sum_{\substack{0\le s \le r \\ \bk \le \bn_{\bell}}}
	u^{(\bell)}_{s,\bk} \, \eta_s(t) \, \varphi^{(1)}_{\ell_1,k_1}(\bx) \, \varphi^{(2)}_{\ell_2,k_2}(\bv), 
\quad \bn_{\bell} = [n^{(1)}_{\ell_1},n^{(2)}_{\ell_2}].
\end{split}
\end{align*}
The representation however is not unique since the functions 
\begin{align} \label{eq:spanningset}
\eta_s \otimes \varphi^{(1)}_{\ell_1,k_1} \otimes \varphi^{(2)}_{\ell_2,k_2}, \quad 0\le s\le r, \, \bk \le \bn_\bell, \, \vert \bell\vert_1 = L+1,  
\end{align}
are just a spanning set and not a basis. This leads to additional degrees of freedom, which are negligible for the interesting case of $d=2,3$, however.

\subsection{Discrete Equations and Matrices \label{sub:discrete_eqns}}

We will now use the spanning set \eqref{eq:spanningset} to derive a system of linear equations for the discrete equation \eqref{eq:weak_form_j} in weak form for a time strip $I_j$. Omitting again the index $j$ gives
\begin{align} \label{eq:Adelta}
A^{(\delta)} \bu = \bb^{(\delta)}, \quad \bu = \big[ \bu_{\bell}\big]_{\vert \bell\vert_1 = L+1}, \quad 
	\bu_{\bell} = \big[ u^{(\bell)}_{s,\bk} \big]_{\substack{0\le s \le r \\ \bk_i \le \bn_{\bell}}}.
\end{align}
where the blocks are given by 
\begin{align*}
A^{(\delta)}_{\bell, \bell'} & = \Big[ 
		a^{(\delta)}_j\big(\eta_{s'} \otimes \varphi^{(1)}_{\ell_1',k_1'} \otimes \varphi^{(2)}_{\ell_2',k_2'},
			\eta_{s} \otimes \varphi^{(1)}_{\ell_1,k_1} \otimes \varphi^{(2)}_{\ell_2,k_2}\big) 
	\Big]_{\substack{0\le s,s'\le r\\ {\bk \le \bn_{\bell}}, \bk' \le \bn_{\bell'}}} \\
\bb^{(\delta)}_{\bell} &=\Big[ 
		b^{(\delta)}_j\big(	\eta_{s} \otimes \varphi^{(1)}_{\ell_1,k_1} \otimes \varphi^{(2)}_{\ell_2,k_2}\big) 
	\Big]_{\substack{0\le s\le r\\ \bk \le \bn_{\bell}}}
\end{align*}
for $\vert \bell\vert_1, \vert \bell'\vert_1 = L+1$. Note that the matrix $A^{(\delta)}$ is not invertible, but the system is still solvable since the right hand side is in the range of the matrix \cite{griebel2014a}. We will later use iterative methods to solve the system which are based on the application of the matrix to a vector, see Sect. \ref{sub:solver}.

For its efficient calculation we exploit the tensor product structure for each individual block $A^{(\delta)}_{\bell,\bell'}$. In the first step we use a quadrature rule of sufficient order for the time integration on the interval $I_j = [t_{j-1}, t_j]$:
\[ \int_{I_j} f(t) \, \mathrm d t \approx \sum_{\mu=1}^{m} w_{\mu} f(\tau_\mu), \quad \tau_\mu \in [t_{j-1}, t_j]. \]
Define for $1 \le \mu \le m$
\begin{align*}
& M_\mu^{(t)} = \big[ \eta_{s'}(\tau_{\mu}) \cdot \eta_s(\tau_\mu) \big]_{s,s'}, \quad
	T_\mu^{(t)} =  \big[ \eta'_{s'}(\tau_{\mu}) \cdot \eta_s(\tau_\mu) \big]_{s,s'}, \\
& C_\mu^{(t)} = \big[ \eta_{s'}'(\tau_\mu) \cdot \eta_s'(\tau_\mu) \big]_{s,s'}, \quad
			G^{(t)} = \big[ \eta_{s'}(t_{j-1}) \cdot \eta_s(t_{j-1}) \big]_{s,s'}
\end{align*}
for $0 \le s,s' \le r$ and
\begin{align} \label{eq:matrices}
\begin{split}
&M^{(\br)}_{\bell,\bell'} = \big[ (\varphi_{\bell',\bk'}, \varphi_{\bell,\bk}) \big], \quad
	T^{(\br)}_{\mu,\bell,\bell'} = \big[ (\partial_{\bbeta(\tau_\mu, \cdot)} \varphi_{\bell',\bk'}, \varphi_{\bell,\bk}) \big], \\
&C^{(\br)}_{\mu,\bell,\bell'} = \big[ (\partial_{\bbeta(\tau_\mu, \cdot)} \varphi_{\bell',\bk'}, 
			\partial_{\bbeta(\tau_\mu, \cdot)}\varphi_{\bell,\bk}) \big], \quad
	K^{(\br)}_{\mu,\bell,\bell'} = \big[ (\sigma(\tau_\mu,\cdot) \varphi_{\bell',\bk'}, \varphi_{\bell,\bk}) \big], \\
&\tilde K^{(\br)}_{\mu,\bell,\bell'} = \big[ (\sigma(\tau_\mu,\cdot) \varphi_{\bell',\bk'}, 
		\partial_{\bbeta(\tau_\mu,\cdot)} \varphi_{\bell,\bk}) \big], \quad
G^{(\br)}_{\mu,\bell,\bell'}  = \big[ \langle \varphi_{\bell',\bk'}, \varphi_{\bell,\bk}\rangle_{\Gammax(\tau_\mu)} \big]
\end{split}
\end{align}
with indices $\bk \le \bn_{\bell}, \, \bk' \le \bn_{\bell'}$. Hence one block can be approximated by 
\begin{align*}
A^{(\delta)}_{\bell, \bell'} & \approx \bar A^{(\delta)}_{\bell, \bell'} \\
&= 
\sum_{\mu}^m w_\mu \Big[ 
	(T_\mu^{(t)} + \delta \, C^{(t)}_{\mu})\otimes M^{(\br)}_{\bell,\bell'} 
	+ \delta \, T_\mu^{(t)} \otimes \big(T^{(\br)}_{\mu,\bell,\bell'}\big)^T \\
&\quad \quad \quad
	+ \delta \, \big(T^{(t)}_{\mu}\big)^T \otimes \big(T^{(\br)}_{\mu,\bell,\bell'} +K^{(\br)}_{\mu,\bell,\bell'}\big) \\
&\quad \quad \quad
		+ M_\mu^{(t)} \otimes 
		\big(T^{(\br)}_{\mu,\bell,\bell'} + \delta\, C^{(\br)}_{\mu,\bell,\bell'} + K^{(\br)}_{\mu,\bell,\bell'}+\tilde K^{(\br)}_{\mu,\bell,\bell'} 
			-  G^{(\br)}_{\mu,\bell,\bell'}\big) \Big] \\
&\quad  
	+ G^{(t)} \otimes M^{(\br)}_{\bell,\bell'}.
\end{align*}

The matrix $M^{(\br)}_{\bell,\bell'}$ acting on the variables $\bx$ and $\bv$ has itself a tensor product structure
\begin{align*}
M^{(\br)}_{\bell,\bell'} = M^{(1)}_{\ell_1,\ell_1'} \otimes M^{(2)}_{\ell_2,\ell_2'}, \quad 
	M^{(i)}_{\ell,\ell'} = \big[ (\varphi^{(i)}_{\ell',k'},\varphi^{(i)}_{\ell,k})_{L^2(\Omega^{(i)})} \big]_{k,k'}\,,
\end{align*}
which can be exploited. In general this is not the case for the other matrices, making a $2d$-dimensional integration necessary. In the following we will therefore assume that the coefficients $\bbeta$ and $\sigma$ have a tensor product structure in the sense of
\begin{align*}
\bbeta(t,\bx,\bv) &= \sum_{i=1}^d a_i(t,\bx) \, b_i(t,\bv) \, \be_i + c_i(t,\bx) \, d_i(t,\bv) \,\, \be_{i+d}, \\
\sigma(t,\bx,\bv) &= p(t,\bx) \, q(t,\bv),
\end{align*}
where $\be_i$ is the $i$-th unit vector, or can be represented (approximately) in a short sum of such tensor products. In the first case for example
\begin{align*}
T^{(\br)}_{\mu,\bell,\bell'}  
	& = \sum_{i=1}^d \Big[ 
		\big(a_i(\tau_\mu,\cdot) \partial_{x_i} \varphi^{(1)}_{\ell_1',k_1'}, \varphi^{(1)}_{\ell_1,k_1}\big)_{L^2(\Omega^{(1)})} \cdot
		\big(b_i(\tau_\mu,\cdot) \varphi^{(2)}_{\ell_2',k_2'}, \varphi^{(2)}_{\ell_2,k_2}\big)_{L^2(\Omega^{(2)})}
	\big]_{\bk,\bk'} \\
	& + \sum_{i=1}^d \Big[ 
		\big(c_i(\tau_\mu,\cdot) \varphi^{(1)}_{\ell_1',k_1'}, \varphi^{(1)}_{\ell_1,k_1}\big)_{L^2(\Omega^{(1)})} \cdot
		\big(d_i(\tau_\mu,\cdot) \partial_{v_i} \varphi^{(2)}_{\ell_2',k_2'}, \varphi^{(2)}_{\ell_2,k_2}\big)_{L^2(\Omega^{(2)})}
	\big]_{\bk,\bk'} \\
	& = \sum_{i=1}^d \big(
			A^{(\mu)}_{i,\ell_1,\ell_1'} \otimes B^{(\mu)}_{i,\ell_2,\ell_2'} + C^{(\mu)}_{i,\ell_1,\ell_1'} \otimes D^{(\mu)}_{i,\ell_2,\ell_2'}
		\big)
\end{align*}
with appropriately defined matrices. The generalization to a short sum is straight forward. All other matrices can be treated analogously except the matrices $G^{(\br)}_{\mu,\bell,\bell'}$ for the boundary condition which in some cases may also be written in form of a tensor product, see the example in Sect \ref{sub:Lshaped}. 

Using this strategy, the calculation of the factors can can be reduced to $d$-dimen\-sional integrals over $\Omega^{(i)}$. Due to the multilevel structure of the spaces $V^{(i)}_j$ it suffices to calculate the matrices on the finest scale and then use an interpolation operator. For $\ell'<\ell$ denote by $E^{(i)}_{\ell,\ell'}$ the matrix representation of the embedding  from $V^{(i)}_{\ell'}$ in $V^{(i)}_{\ell}$. Then for example
\begin{align*}
A^{(\mu)}_{i,\ell,\ell'} = \big(E^{(i)}_{L,\ell}\big)^T \cdot A^{(\mu)}_{i,L,L} \cdot E^{(i)}_{L,\ell'}.
\end{align*}
For the calculation of $A_{i,L,L}^{(\mu)}$ classical finite element libraries can be used directly.

The application of the matrix $A^{(\delta)}$ to a vector $\bu$ can be done blockwise, i.e. where block $\bell$ is given by
\begin{align*}
\big[A^{(\delta)}\, \bu\big]_{\bell} = \sum_{\vert \bell'\vert_1=L+1} A^{(\delta)}_{\bell,\bell'} \cdot \bu_{\bell'}
\end{align*}
while the tensor product structure can be exploited for each block. However, the order of application is crucial \cite{schwab2003,zeiser2011} to prevent the need to store intermediate results on full grids. Therefore to calculate
\begin{align*}
S^{(t)} \otimes S^{(1)}_{\ell_1,\ell_1'} \otimes S^{(2)}_{\ell_2,\ell_2'} \, \bu_{\bell'}
\end{align*}
for general matrices $S^{(t)},S^{(1)},S^{(2)}$ we will use 
\begin{align*}
\big(S^{(t)}\otimes \mathrm{Id}^{(1)}_{\ell_1} \otimes \mathrm{Id}^{(2)}_{\ell_2}\big) \cdot 
	\big(\mathrm{Id}^{(t)} \otimes S^{(1)}_{\ell_1,\ell_1'} \otimes \mathrm{Id}^{(2)}_{\ell_2}\big) \cdot 
	\big(\mathrm{Id}^{(t)} \otimes \mathrm{Id}^{(1)}_{\ell_1'} \otimes S^{(2)}_{\ell_2,\ell_2'}\big) \cdot \bu_{\bell'}
\end{align*}
in the case $\ell_1'+\ell_2\le \ell_1 + \ell_2'$ and
\begin{align*}
\big(S^{(t)}\otimes \mathrm{Id}^{(1)}_{\ell_1} \otimes \mathrm{Id}^{(2)}_{\ell_2}\big) \cdot 
	\big(\mathrm{Id}^{(t)} \otimes \mathrm{Id}^{(1)}_{\ell_1} \otimes S^{(2)}_{\ell_2,\ell_2'}\big) \cdot
	\big(\mathrm{Id}^{(t)} \otimes S^{(1)}_{\ell_1,\ell_1'} \otimes \mathrm{Id}^{(2)}_{\ell_2'}\big) \cdot \bu_{\bell'}
\end{align*}
otherwise, where $\mathrm{Id}$ the identity matrix in the respective spaces. The application of such a tensor product matrix can be applied in log-linear complexity with respect to the degrees of freedom.

\subsection{Solution of Linear System of Equations \label{sub:solver}}

For the solution of \eqref{eq:Adelta} we will use an iterative method to exploit the tensor product structure of the block matrices and use the combination technique as a preconditioner. 

The sparse grid combination technique was introduced in \cite{griebel1992} (see also \cite{garcke2012}). Instead of solving the discretized equation on the sparse grid, solutions are computed on a sequence of anisotropic full grid spaces. They are then combined to form a sparse grid function. For a quite general class of symmetric bilinear forms it can be shown that this method has the same order of convergence as the sparse grid solution \cite{griebel2014}. Alternatively, it can also be used as a preconditioner for the sparse grid discretization, see \cite{griebel1994} and \cite{widmer2009} in the case of radiative transfer. 

In our non-symmetric case we follow the latter approach and use the combination technique as a preconditioner for an outer Richardson iteration, see Algorithm \ref{alg:richardson}. In order to compute the preconditioner we have to solve the transport equation 
\begin{align} \label{eq:Adeltall}
A^{(\delta)}_{\bell,\bell} \, \Delta \widehat{\bu}_\bell = \br_{\bell}
\end{align}
on small anisotropic full grid spaces with levels $L$ (line \ref{alg:solution_Lp1}) and $L-1$ (line \ref{alg:solution_L}). 

\begin{algorithm}[!ht]
\caption{Richardson iteration with combination technique preconditioner for the solution of \eqref{eq:Adelta}  \label{alg:richardson}}
\begin{algorithmic}[1]
\State $\bu \leftarrow 0$
\Repeat
	\State $\br \leftarrow A^{(\delta)} \bu - \bb^{(\delta)}$
	 \For{$\ell_1 = 1, \ldots,L$} \Comment anisotropic spaces of level $L$
	 	\State $\bell\leftarrow (\ell_1,L+1-\ell_1)$
	 	\State Solve $A^{(\delta)}_{\bell,\bell} \, \Delta \widehat{\bu}_\bell = \br_{\bell}$ \label{alg:solution_Lp1}
	 \EndFor
	\For{$\ell_1 = 1, \ldots,L-1$} \Comment  anisotropic spaces of level $L-1$
		\State $\bell \leftarrow (\ell_1,L-\ell_1)$ 
		\State $\br_{\bell} \leftarrow \big(\mathrm{Id}^{(t)} \otimes \big(E^{(1)}_{\ell_1+1,\ell_1}\big)^T \otimes \mathrm{Id}^{(2)}_{\ell_2}\big) \,\br_{\ell_1+1,\ell_2}$  \label{alg:prolong_res}
		\State Solve $A^{(\delta)}_{\bell,\bell} \, \Delta \widehat{\bu}_\bell = \br_{\bell}$ \label{alg:solution_L}
	\EndFor
	\For{$\ell_1 = 1, \ldots,L$} \Comment combination technique
		\State $\bell \leftarrow (\ell_1,L+1-\ell_1)$ 
		\State $\Delta \bu_\bell \leftarrow \begin{cases}
			\Delta \widehat{\bu}_\bell  & \ell_1=1 \\
			\Delta \widehat{\bu}_\bell - \big(E^{(1)}_{\ell_1,\ell_1-1} \otimes \mathrm{Id}^{(2)}_{\ell_2}\big)\,
	 		\Delta \widehat{\bu}_{\ell_1-1,\ell_2} & \ell_1 > 1 \end{cases}$ \label{alg:combination}
	\EndFor
	\State $\Delta \bu \leftarrow [\Delta \bu_\bell]_{\vert \bell\vert  = L+1}$
	\State $\bu \leftarrow \bu - \Delta \bu$ \Comment preconditioned Richardson step
\Until $\|\Delta \bu\|_{H^1} < \epsilon$
\end{algorithmic}
\end{algorithm}

For that purpose we use a preconditioned GMRES. As a first preliminary preconditioner for these systems, we use the inverse of
\[ \big(T^{(t)} + \delta C^{(t)} + G^{(t)}\big) \otimes M^{(1)}_{\ell_1,\ell_1} \otimes M^{(2)}_{\ell_2,\ell_2} \, , \]
which can be computed efficiently using the LU decomposition in each product space. As a consequence the tensor product structure (see Sect. \ref{sub:discrete_eqns}) can be used for all calculations and neither the full matrix $A^{(\delta)}$ \eqref{eq:Adelta} nor the smaller matrices $A^{(\delta)}_{\bell,\bell}$ \eqref{eq:Adeltall} have to be assembled. 

Alternatively, we can also use other preconditioners like ILU or direct sparse solvers. For that purpose, however, the matrices $A^{(\delta)}_{\bell,\bell}$ for the small anisotropic full grid spaces have to be assembled. For the future it would be beneficial to devise schemes similar to \cite{griebel2014a} for the case of symmetric bilinear forms or use more advanced preconditioned iterative schemes for the anisotropic full grids \cite{reisinger2004}.

Using the combination technique we compute the preconditioned residuum in line \ref{alg:combination}. Note that we used the embedding to represent the solution on spaces with level $\vert \bell\vert_1=L+1$. For interpolation such a scheme can be shown to give the exact result \cite{garcke2012}.  

The iteration is stopped, if the norm of preconditioned residuum is small enough, i.e. $\| \Delta \bu \|_{H^1(J\times \Omega)} < \epsilon$, for some given tolerance $\epsilon$.

\section{Numerical Experiments \label{sec:experiments}}

In this section we present numerical examples to study the method described in Sect.\ \ref{sec:algorithm}. The implementation is based on the finite element library \verb|MFEM| \cite{mfem} and uses its Python wrapper \verb|PyMFEM|\footnote{\url{https://github.com/mfem/PyMFEM}}. Due to the regular data structure of the combination technique standard numerical routines from \verb|numpy|\footnote{\url{https://numpy.org/}} and \verb|scipy|\footnote{\url{https://scipy.org/}} were used. The \verb|cupy|\footnote{\url{https://cupy.dev/}} library was used as a replacement for the numerical libraries to perform the calculations on a GPU with only minor code changes. The computations were carried out on a desktop computer (i9 7900, 128 GB RAM, GTX 1080). 

\subsection{Linear advection with constant coefficients}

As the first example we consider the linear advection equation with constant coefficients in $d+d$ dimensions
\begin{align} \label{eq:constant_coeff}
\begin{split}
\partial_t u + \bone \cdot \nabla_{\bx} u + \bone \cdot \nabla_{\bv} u &= 0, \quad \bx, \bv \in [0,1]^d \\
u(0, \bx, \bv) &= \sin\big(2 \pi \, \sum_{i=1}^d (x_i + v_i) \big)
\end{split}
\end{align}
with periodic boundary conditions. The solution is given by
\begin{align*}
u(t, \bx, \bv) = \sin\Big(2 \pi \, \big(\sum_{i=1}^d (x_i + v_i) - 2 d t\big) \Big),
\end{align*}
which is periodic in $t$ for $T=1/(2d)$.

The mesh on the coarsest scale on each $\Omegai$ consists of $4^d$ $d$-cubes of equal size and is uniformly refined for finer scales. The finite element functions are continuous piecewise polynomials of order $r$ for $r=1,2$. The streamline diffusion parameter is chosen as the edge length on the finest scale. The equation is solved on one period $T$ using discontinuous polynomials of the same order $r$ as in the spatial discretization and $2^{L+1}$ steps.

The numerical solution is compared to the interpolation of the analytical solution in $V_{L+1,\mathcal I}$ and polynomial order $r+1$. The streamline diffusion norm of the difference is computed and reported in Table \ref{tab:constant_coeff}. There the order is estimated from the errors of two successive runs.  The convergence is also depicted in Fig. \ref{fig:constant_coeff}. The numerical results confirm the convergence order of $h^{r+1/2}$ from Thm.\ \ref{thm:convergence}.

\begin{table}
\caption{Error with respect to the streamline diffusion norm $\vvvert\cdot\vvvert_\delta$ and estimated order of the numerical solution of \eqref{eq:constant_coeff} on $d+d$ dimensions after one period for sparse grid spaces with levels $L$ and order $r$ of the ansatz functions; $h$ denotes the grid size of the finest level}
\label{tab:constant_coeff}
\begin{center}
\begin{tabular}{ccrrrrrr}
\toprule
$L$ & $h$ & dof & error & order & dof & error & order \\ \midrule
$d=1$ & & $r=1$ & & &$r=2$& &  \\\hline 
1 & 2.50e-01 & 16 & 8.68e-01 & & 64 & 3.09e-01 & \\
2 & 1.25e-01 & 48 & 5.25e-01 & 0.72& 192 & 5.80e-02 & 2.42\\
3 & 6.25e-02 & 128 & 2.03e-01 & 1.37& 512 & 1.04e-02 & 2.48\\
4 & 3.12e-02 & 320 & 7.20e-02 & 1.49& 1280 & 1.84e-03 & 2.50\\
5 & 1.56e-02 & 768 & 2.55e-02 & 1.50& 3072 & 3.25e-04 & 2.50\\
6 & 7.81e-03 & 1792 & 8.99e-03 & 1.50& 7168 & 5.74e-05 & 2.50\\ \midrule
$d=2$ & & $r=1$ & & &$r=2$& &  \\\midrule
1 & 2.50e-01 & 256 & 8.14e-01 & & 4096 & 3.92e-01 & \\
2 & 1.25e-01 & 1792 & 5.98e-01 & 0.45& 28672 & 7.47e-02 & 2.39\\
3 & 6.25e-02 & 10240 & 2.64e-01 & 1.18& 163840 & 1.34e-02 & 2.48\\
4 & 3.12e-02 & 53248 & 9.58e-02 & 1.46& 851968 & 2.37e-03 & 2.50\\
5 & 1.56e-02 & 262144 & 3.39e-02 & 1.50& 4194304 & 4.19e-04 & 2.50\\
6 & 7.81e-03 & 1245184 & 1.19e-02 & 1.51& 19922944 & 7.41e-05 & 2.50\\
\midrule
$d=3$ & & $r=1$ & & &$r=2$& &  \\\midrule 
1 & 2.50e-01 & 4096 & 7.36e-01 & & 262144 & 4.59e-01 & \\
2 & 1.25e-01 & 61440 & 6.17e-01 & 0.25& 3932160 & 8.94e-02 & 2.36\\
3 & 6.25e-02 & 720896 & 3.13e-01 & 0.98& 46137344 & 1.60e-02 & 2.48\\
4 & 3.12e-02 & 7602176 & 1.17e-01 & 1.42&&  &\\ \botrule
\end{tabular}
\end{center}
\end{table}

\begin{figure}
\begin{center}
\includegraphics[scale=0.7]{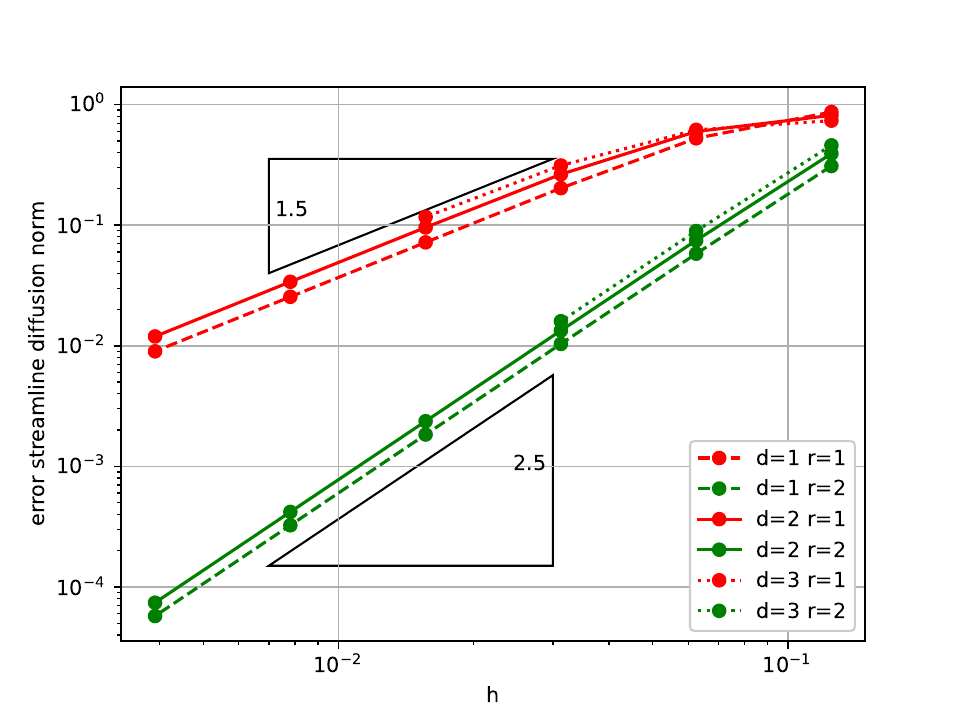}
\end{center}
\caption{Error with respect to the streamline diffusion norm $\vvvert\cdot\vvvert_\delta$ of the numerical solution of \eqref{eq:constant_coeff} on $d+d$ dimensions after one period for sparse grid spaces with order $r$ of the ansatz functions; $h$ denotes the grid size of the finest level}
\label{fig:constant_coeff}
\end{figure}

\subsection{L-shaped Spatial Domain \label{sub:Lshaped}}

In the next example we solve the kinetic equation
\begin{align} \label{eq:lshaped}
\partial_t u + \bv \cdot \nabla_\bx u = 0, \quad [0,1] \times \Omega
\end{align}
on a L-shaped spatial domain  
\begin{align*}
\Omega = \Omegax \times \Omegav, \quad 
	\Omegax = (-1,1)^2 \setminus [0,1]^2, \quad
	\Omegav = [-2,0] \times [0,2], 
\end{align*}
see Fig. \ref{fig:Lshapedmesh}. This demonstrates the use of the method for non-rectangular domains, boundary conditions, and discontinuous solutions. We choose zero inflow boundary condition and 
\begin{align*}
u_0(\bx,\bv) = 
	\psi\Big(\frac{x_1-0.5}{0.25}\Big) \cdot
	\psi\Big(\frac{x_2+0.5}{0.25}\Big) \cdot
	\psi\Big(\frac{v_1+1}{0.25}\Big) \cdot
	\psi\Big(\frac{v_2-1}{0.25}\Big)
\end{align*}
as the initial condition. Here $\psi$ is a $C^1$ function centered at $0$ and supported in $[-1,1]$:
\begin{align*}
\psi(\xi) = \begin{cases}
(\vert \xi\vert -1)^2 \cdot (2\vert \xi\vert +1) & \vert \xi\vert  \le 1 \\
0 & \vert \xi\vert  > 1
\end{cases}
\end{align*} 
For the assessment of the numerical results we will concentrate on the spatial distribution
\begin{align} \label{eq:spatial_dist}
\rho(t,\bx) = \int_{\Omegav} u(t,\bx,\bv) \, \mathrm d \bv.
\end{align}

The initial distribution is supported in $[0.25,0.75] \times [-0.75,-0.25]$. It is transported mainly in the direction $[-1,1]^T$ such that it is completely located in $[-1,0]\times[0,1]$ at the final time $t=1$, see Fig. \ref{fig:Lshapedmesh}. By the method of characteristics  the solution is given by 
\begin{align} \label{eq:analytic_lshaped}
u(1,\bx,\bv) = \begin{cases}
u_0(\bx-\bv,\bv) & v_1 x_2 > v_2 x_1 \\
0  & \textnormal{otherwise}
\end{cases}
\end{align}
for $\bx \in [-1,0]\times[0,1], \, \bv \in [-1.25,-0.75]\times[0.75,1.25]$. Outside this domain it is zero. Here, the L-shaped domain boundary leads to a discontinuous distribution.

\begin{figure}
\begin{center}
\includegraphics[trim=150 520 300 120, clip,width=0.4\textwidth]{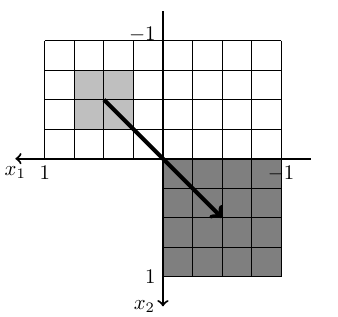}
\includegraphics[width=0.45\textwidth]{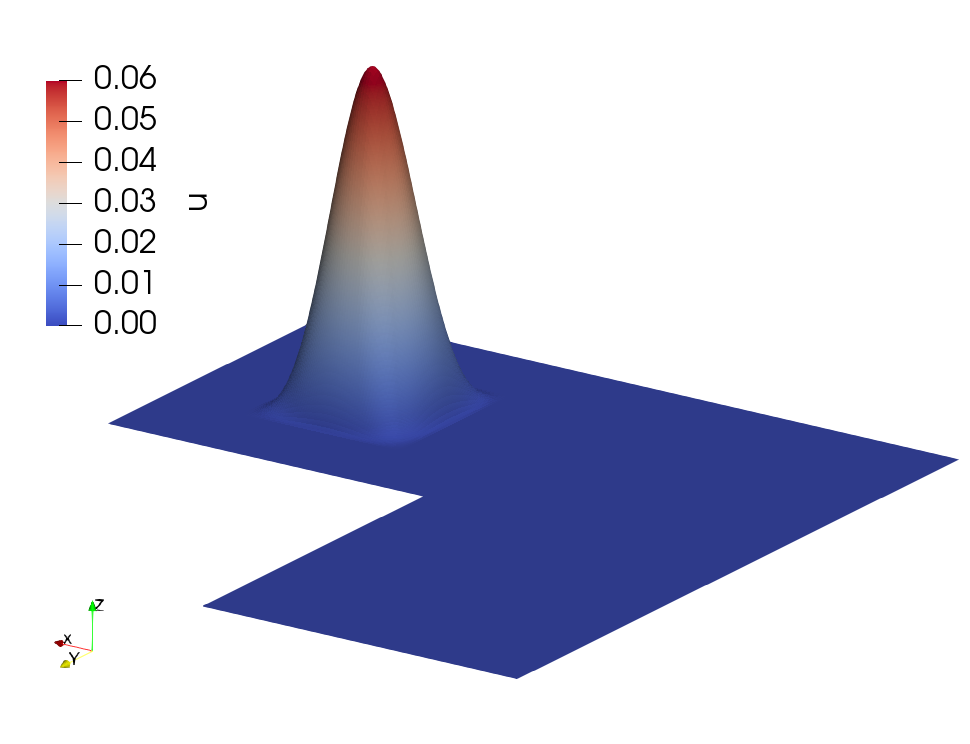}
\end{center}
\caption{Left: spatial mesh on the coarsest scale for domain $\Omegax$. The initial distribution is supported in the light gray area and is transported mainly along the arrow. It is completely contained in the dark area at $t=1$; right: initial spatial distribution $\rho(0,\bx)$  }
\label{fig:Lshapedmesh}
\end{figure}

The matrices $G^{(\br)}_{\mu,\bell,\bell'}$ from \eqref{eq:matrices} corresponding to the boundary condition can be represented as a short sum of tensor products by partitioning the inflow boundary. For example the inflow at $x_1=-1$ with normal $\bn = [-1,0,0,0]^T$ is given by
\begin{align*}
\{-1\} \times [-1,1] \times \big\{ \bv \in \Omegav \,\big\vert\, -v_1 < 0 \big\}.
\end{align*}
Hence its contribution to the element $\bk,\bk'$ of $G^{(\br)}_{\mu,\bell,\bell'}$ can be calculated by
\begin{align*}
\begin{split}
\int_{-1}^1 \int_{\Omegav} &\varphi^{(1)}_{\ell_1',k_1'}(-1, x_2) \cdot \varphi^{(1)}_{\ell_1,k_1}(-1, x_2) 	\\
&\quad \cdot \chi_{v_1>0}(\bv) \cdot (-v_1) \cdot \varphi^{(2)}_{\ell_2',k_2'}(\bv) \cdot \varphi^{(2)}_{\ell_2,k_2}(\bv) 
	\, \mathrm d \bv \, \mathrm d x_2
\end{split}
\end{align*}
with the characteristic function $\chi$. This integral as well as the integrals over other sections of the inflow boundary can be factorized leading to a sum of tensor product operators for $G^{(\br)}_{\mu,\bell,\bell'}$.

The coarsest mesh for the spatial domain $\Omegax$ is depicted in Fig.\ \ref{fig:Lshapedmesh} and the domain $\Omegav$ is discretized by an $8 \times 8$ regular grid. The equation is simulated for $t\in[0,1]$ with linear elements ($r=1$). 

Fig.\ \ref{fig:Lshaped} (left) shows the resulting spatial distribution for $t=1$ and $L=5$. In order to assess the numerical result it is compared to the distribution computed from the analytical solution \eqref{eq:analytic_lshaped} (right). The integration with respect to $\bv$ was carried out numerically. The results for levels up to $L=6$ are given in Table \ref{tab:Lshaped}.

\begin{figure}
\begin{center}
\includegraphics[width=0.45\textwidth]{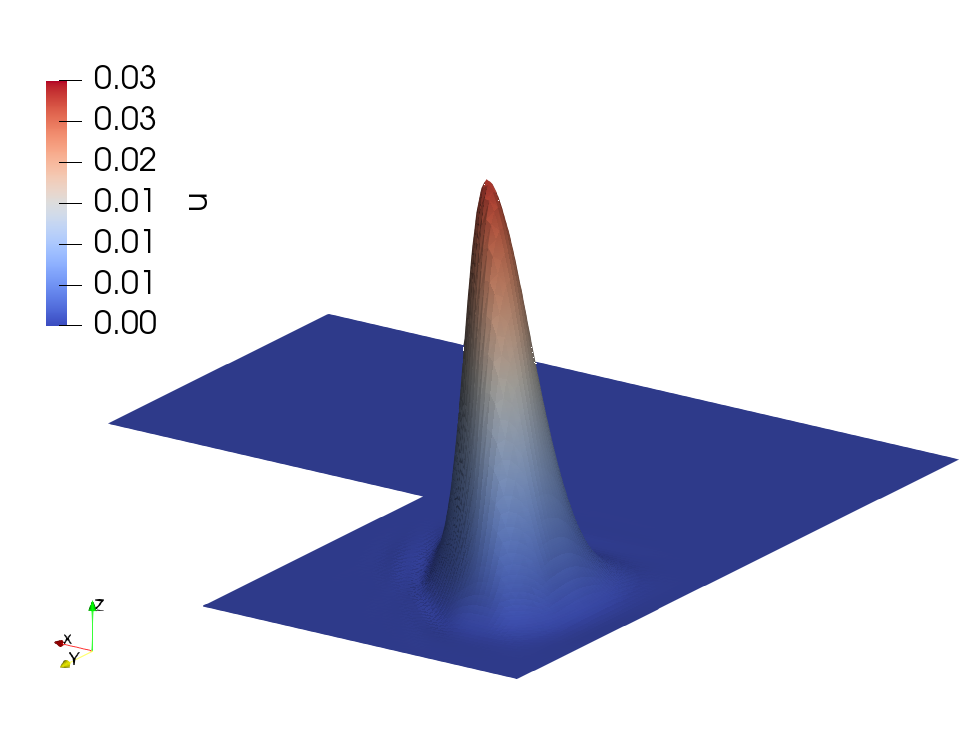}
\includegraphics[width=0.45\textwidth]{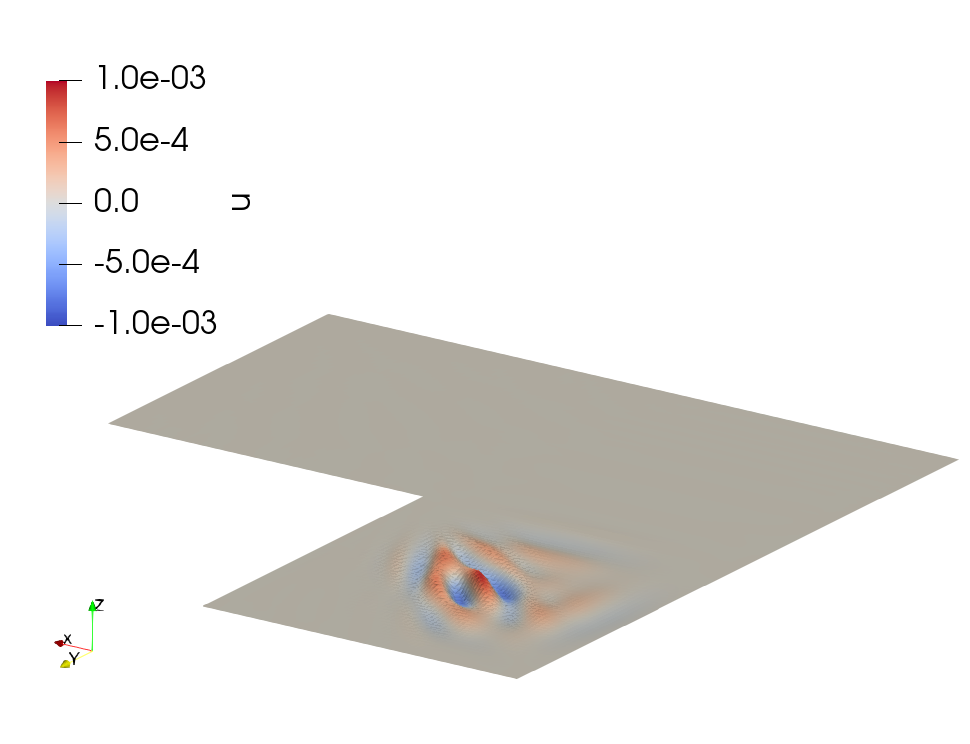}
\end{center}
\caption{Left: Computed spatial distribution $\rho(1,\bx)$ of \eqref{eq:lshaped} for $L=5$ and $r=1$; right: error of the computed spatial distribution \label{fig:Lshaped}}
\end{figure}

Although the solution $u$ is discontinuous the numerical distribution $\rho$ does not show oscillations thanks to the streamline diffusion. The theoretical results do not cover this case and it is too early to assess the order of convergence numerically. However the method seems to perform reasonable in this case.

\begin{table}[ht]
\caption{Relative error of the spatial distribution \eqref{eq:spatial_dist} with respect to the $L_2$-norm and estimated order of the numerical solution of \eqref{eq:lshaped} at $t=1$ for sparse grid spaces with levels $L$ and linear elements; $h$ denotes the grid size of the finest level}
\label{tab:Lshaped}
{\begin{center}
\begin{tabular}{crrrrr} \toprule
$L$ & 2 & 3 & 4 & 5 & 6\\\midrule
$h$ & 6.25e-02 & 3.12e-02 & 1.56e-02 & 7.81e-03 & 3.91e-03\\
rel. err. & 5.89e-01 & 2.68e-01 & 6.56e-02 & 2.58e-02 & 1.28e-02\\
order &  & 1.14 & 2.03 & 1.34 & 1.01\\\botrule
\end{tabular}
\end{center}}
\end{table}

\subsection{Vlasov-Poisson Equation}

In plasma physics the particle density function $f(t,\bx,\bv)$ of electrons in a constant background ion density interacting with electrostatic fields ignoring collisions may be described by the Vlasov-Poisson equations
\begin{align*}
\begin{split}
&\partial_t f + \bv \cdot \nabla_\bx f - \bE(f) \cdot \nabla_\bv f = 0 \\
&	\nabla_\bx \cdot \bE = -\int f(t,\bx,\bv) \, \mathrm d \bv + 1, \quad \nabla \times \bE = \mathbf 0
\end{split}
\end{align*}
supplemented by appropriate boundary conditions \cite{einkemmer2018a}. The first equation fits in the framework of the transport equation \eqref{eq:transport} except for the non-linearity due to the coupling with the electrical field. Nevertheless we may still use the method in a fixed-point iteration where the transport and field equation are solved alternately. 

Given some approximation of the electrical field $\bE^{(k)}$ we may compute the density function $f^{(k+1)}$ by solving
\begin{align} \label{eq:E_f}
\partial_t f^{(k+1)} + \bv \cdot \nabla_\bx f^{(k+1)} - \bE^{(k)}(t,\bx) \cdot \nabla_\bv f^{(k+1)} = 0.
\end{align}
using the method for linear transport equation presented here. Now given $f^{(k+1)}$ we compute the resulting electrical field by solving the Poisson equation
\begin{align} \label{eq:poisson}
-\Delta_\bx \phi = -\int f^{(k+1)}(t,\bx,\bv) \, \mathrm d \bv + 1
\end{align}
and letting $\bE^{(k+1)} = -\nabla_\bx \phi$.

Hence to solve the coupled equation in a time strip $I_j=[t_{j-1}, t_j]$ we alternate the two steps. Starting with $f^{(0)}$ as the final density $f(t_{j-1}^-)$ of the last step and the resulting electrical field $\bE^{(0)}$ for all $t\in I_j$ we iterate until
\begin{align*}
\| \bE^{(k+1)} - \bE^{(k)} \|_{L^\infty(I_j\times \Omegax)} 
	+ \| f^{(k+1)} - f^{(k)} \|_{H^1(I_j\times \Omegax\times \Omegav)} < \epsilon
\end{align*}
for some tolerance $\epsilon$.

Note that \eqref{eq:poisson} has to be solved only at certain quadrature points $\tau_\mu \in I_j$, see Sect.\ \ref{sub:discrete_eqns}, and that the integral of $f$ on the right hand side can be represented by a finite element function on the finest grid of the space domain $\Omegax$. Its solution is computed by the finite element method on the same grid. The preconditioner needed to solve \eqref{eq:E_f} is updated if the rate of convergence decreases significantly. 

As the first test we consider the classic Landau damping. In the 1+1-dimensional case we use the initial condition
\[ f(0,x,v) = f_0(x,v) = \frac{1}{\sqrt{2 \pi}} \mathrm e^{-v^2/2} \big(1 + \alpha \cos(k x) \big) \]
and parameters $\alpha = 10^{-2}$, $k=1/2$ on a periodic domain $(0, 4 \pi) \times (-v_{\mathrm{max}}, v_{\mathrm{max}})$ with $v_{\mathrm{max}}$ big enough \cite{einkemmer2018a}. We choose $v_{\mathrm{max}}=6$. 

We simulate the system in the time interval $[0,50]$. On level $L=1$ each domain, i.\ e.\ space and velocity, is divided into four equally sized subintervals which are refined globally for higher levels. For $L=5$ one hundred times steps are performed, which are doubled for each increase in level. For the computation of the electrical field the midpoint rule is applied. A direct solver computes the resulting systems of linear equations for the transport as well as the potential equation.

Linear analysis shows that the electric field decays with a rate of $\gamma \approx 0.153$ \cite{pham2013}. In Fig.\ \ref{fig:landau1d} the electrical energy 
\begin{align*}
\frac 1 2 \, \int_{\Omegax} \| \bE(t,\bx) \|^2 \, \mathrm d \bx
\end{align*}
exhibits the analytical decay rate up to $t=25$ or $t=30$ depending on the level $L$. Furthermore the invariants particle number, total energy and entropy, i.e.\
\begin{align} \label{eq:invariants}
\begin{split}
&\int_{\Omega} f(t,\bx,\bv) \, \mathrm d \bx \, \mathrm d \bv, \quad
	\frac 1 2 \int_{\Omega} \vert \bv\vert^2 \, f(t,\bx,\bv) \, \mathrm d \bx \, \mathrm d \bv 
		+ \frac 1 2 \int_{\Omega^{(\bx)}} \vert \bE(t,\bx)\vert^2 \, \mathrm d \bx, \quad \\
&\int_{\Omega} \vert f(t,\bx,\bv)\vert^2 \, \mathrm d \bx \, \mathrm d \bv
\end{split}
\end{align}
are shown. We see that the mass is almost conserved whereas the energy and entropy only up to a small error. 

\begin{figure}
\begin{center}
\includegraphics[width=0.95\textwidth]{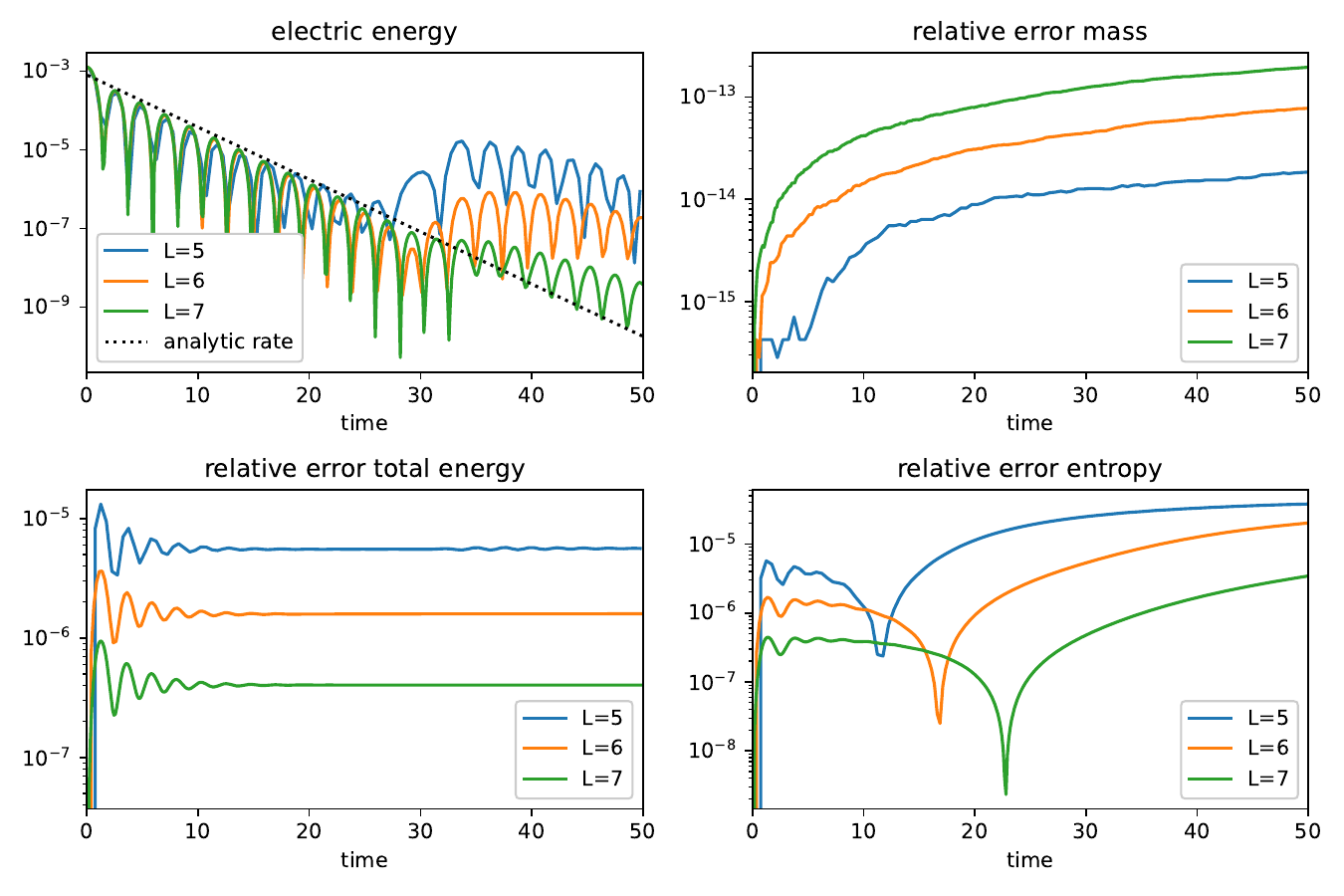}
\end{center}
\caption{Simulation results of the 1+1-dimensional Landau damping for linear elements and different levels $L$; the electric energy including the analytical decay rate and relative error of the invariants \eqref{eq:invariants}}
\label{fig:landau1d}
\end{figure}

In the 2+2-dimensional setting we use $\Omega = (0, 4 \pi)^2 \times (-6, 6)^2$ and
\begin{align*}
f(0,\bx,\bv) = \frac{1}{2\pi} \mathrm e^{-\vert \bv\vert^2/2}\, \big( 1 + \alpha \cos(k x_1) + \alpha \cos(k x_2)\big), \quad 
\alpha = 10^{-2}, \, k = \frac 1 2.
\end{align*}
Again we simulate the system with the same settings as in the 1+1-dimensional case, where the coarsest meshes now consist of four by four squares. The results are shown in Fig.\ \ref{fig:landau2d}. Qualitatively the results of the 1+1-dimensional setting are reproduced with slightly higher errors. 
\begin{figure}
\begin{center}
\includegraphics[width=0.95\textwidth]{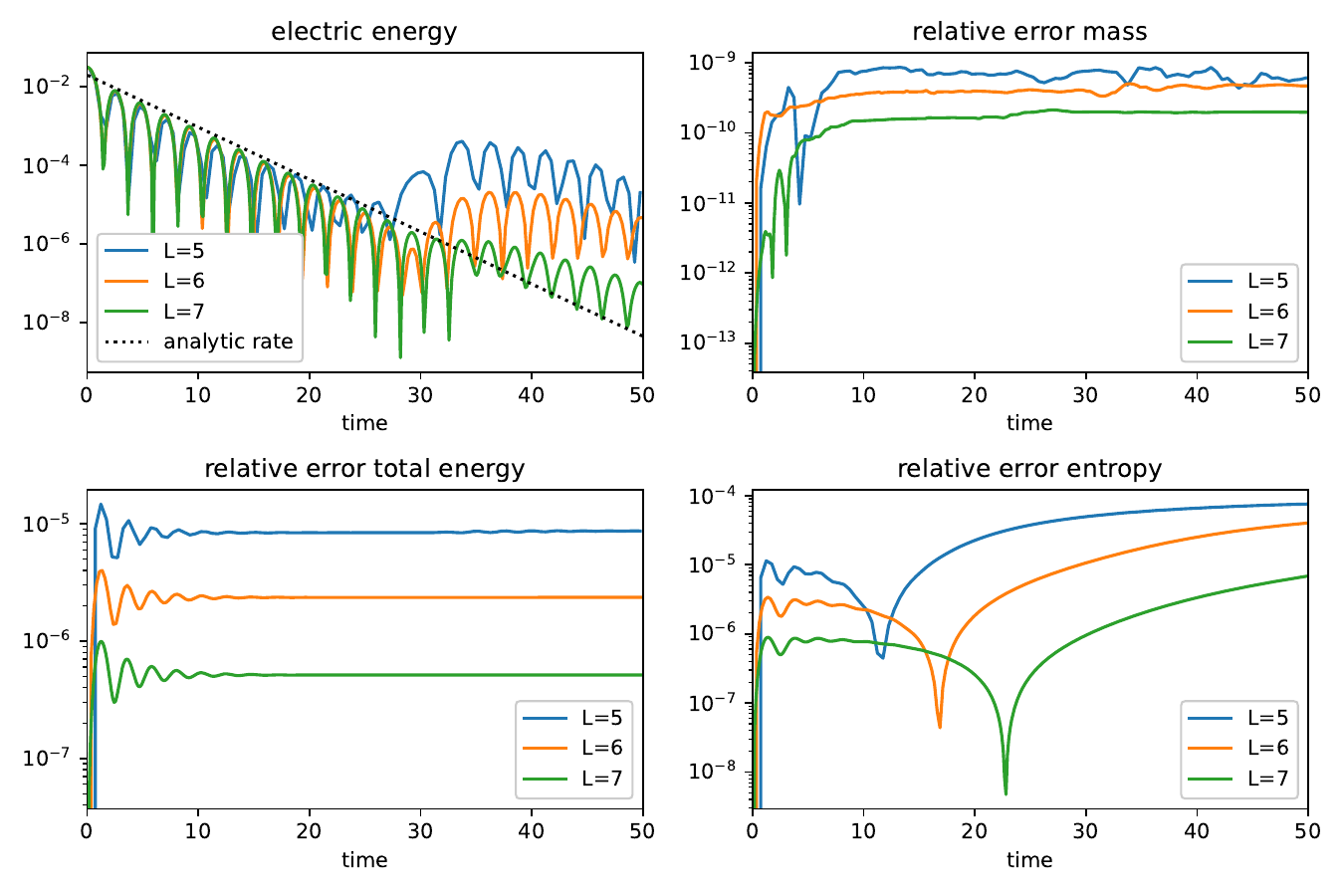}
\end{center}
\caption{Simulation results of the 2+2-dimensional Landau damping for linear elements and different levels $L$; the electric energy including the analytical decay rate and relative error of the invariants \eqref{eq:invariants}  }
\label{fig:landau2d}
\end{figure}

The second example is the two-stream instability, where two beams propagate in opposite directions. Here, small perturbations in the initial condition lead to an exponential increase in electrical energy and a subsequent saturation.

In the 1+1-dimensional case we use the periodic domain $\Omega = (0,10\pi) \times (-9,9)$ and the initial condition
\[ f(0,x,v) = \frac{1}{2\sqrt{2\pi}} \left( \mathrm e^{-(v-v_0)^2/2} + \mathrm e^{-(v+v_0)^2/2} \right) \big(1 + \alpha \cos(k x)\big),\]
where $\alpha = 10^{-3}$, $k=1/5$ and $v_0=2.4$. The results are shown in Fig.\ \ref{fig:landau2s_1d}.

For all levels used, one observes an almost identical exponential increase of the electrical energy and a saturation where the energy remains almost constant. Regarding the physical invariants, the mass is almost conserved, while the total energy and especially the entropy show a strong increase. Comparing these results with the Landau damping, we see that this is a more challenging problem. We note that the increase in the error occurs during the time when the electrical energy increases exponentially and then remains almost constant.  

\begin{figure}
\begin{center}
\includegraphics[width=0.95\textwidth]{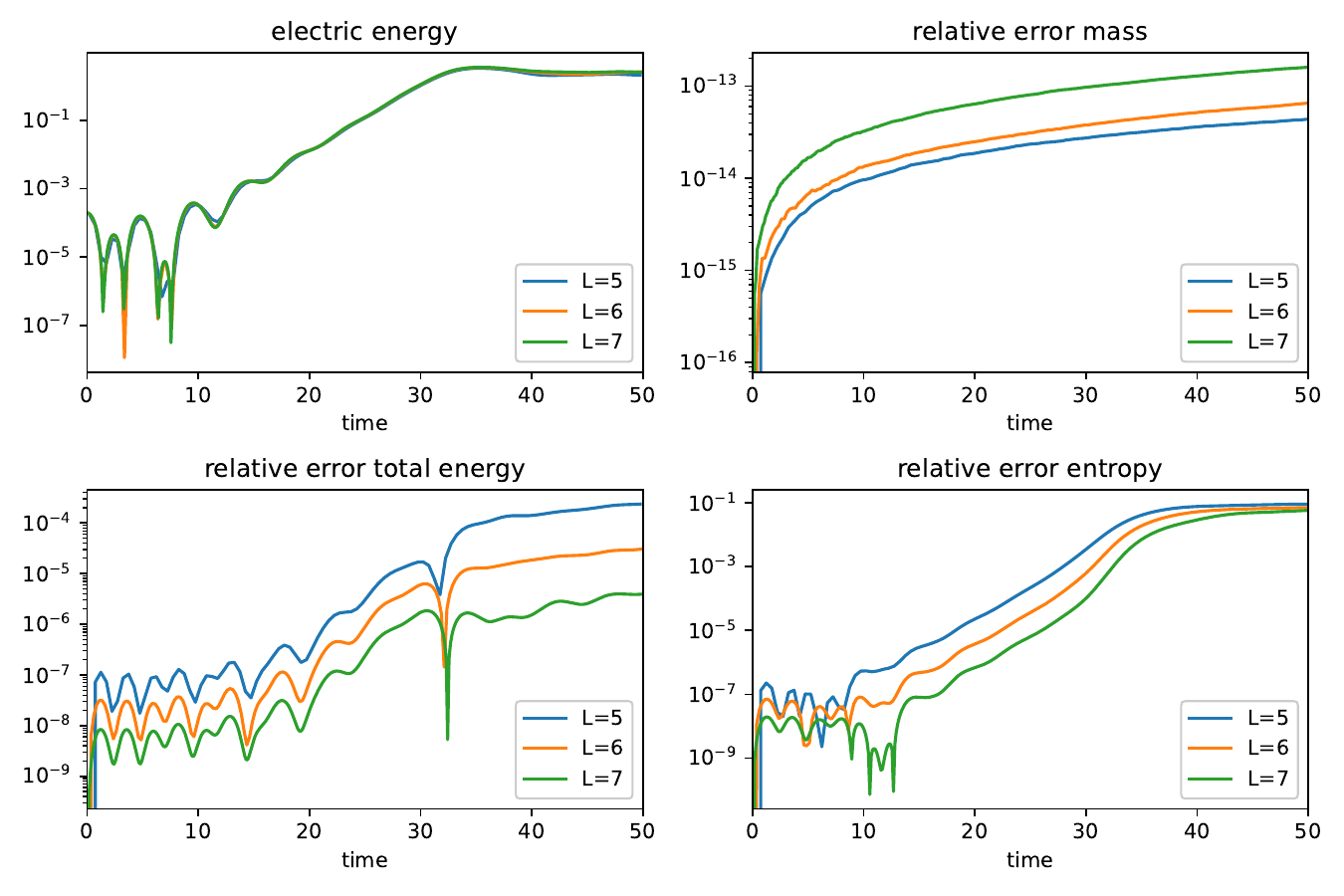} 
\end{center}
\caption{Simulation results of the 1+1-dimensional two-stream instability for linear elements and different levels $L$}
\label{fig:landau2s_1d}
\end{figure}

Now turning to the 2+2-dimensional case we let $\Omega = (0,10\pi)^2 \times (-9,9)^2$ with periodic boundary condition and
\begin{align*}
f(0,\bx,\bv) = \frac{1}{8\pi} \big( 1 + \alpha \cos(k x_1) + \alpha \cos(k x_2)\big) \prod_{k=1}^2 \left( \mathrm e^{-(v_i-v_0)^2/2} + \mathrm e^{-(v_i+v_0)^2/2} \right),
\end{align*}
where again $\alpha = 10^{-3}$, $k=1/5$ and $v_0=2.4$. The results are shown in Fig.\ \ref{fig:landau2s_2d}.

The energy as well as the invariants show almost identical behavior as in the 1+1-dimensional case. Only the error of mass increases by several orders of magnitude, but is still small. The error of the invariants improves with increasing level $L$.

\begin{figure}
\begin{center}
\includegraphics[width=0.95\textwidth]{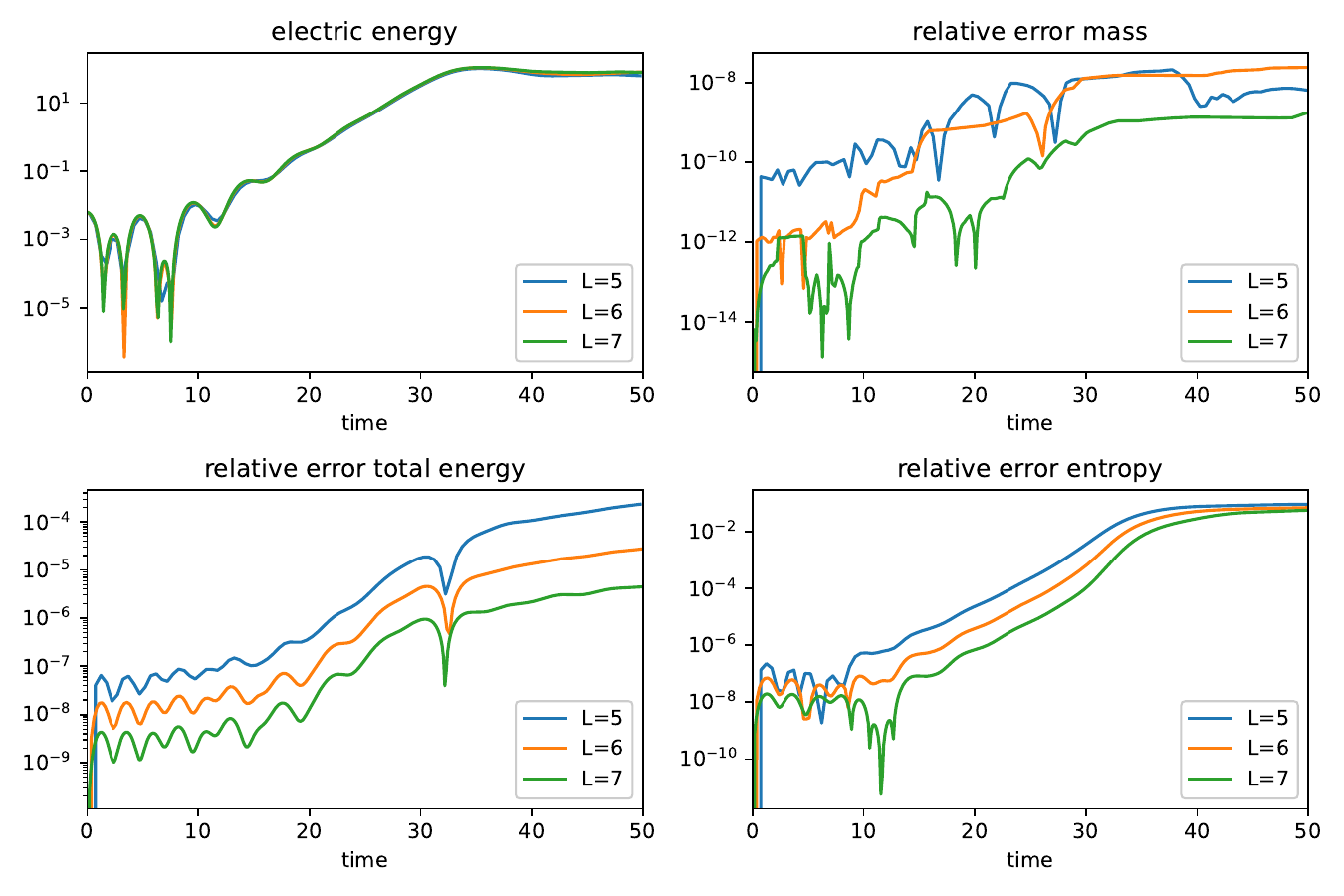}
\end{center}
\caption{Simulation results of the 2+2-dimensional two-stream instability for linear elements and different levels $L$}
\label{fig:landau2s_2d}
\end{figure}

In conclusion both tests show that the method can be used also in the case of nonlinear problems. However, the execution times are quite high: for the 2+2-dimensional two-stream instability the simulation takes a little less than three hours for level $L=5$, two days for $L=6$, and two weeks for $L=7$. Note that the code is still prototypical and not optimized for speed. For example, the problems \eqref{eq:Adeltall} arising in the computation of the combination technique preconditioner were solved sequentially, although parallelization is possible. Furthermore, the solution of these systems is the most time-consuming part because of the lack of an efficient preconditioner. In contrast, the Richardson iteration as well as the fixed-point iteration of the density and the electric field require only a few iterations.

\section{Outlook}

In this paper we present a method which is capable of computing the solution of transport equations in moderately complex geometrical domains in up to 3+3-dimensional phase space. By representing sparse grid functions as a linear combination of grid functions in anisotropic full grid spaces, traditional finite element libraries can be used. The resulting regular data structures facilitate parallelization and the use of GPU acceleration. The resulting linear equation can be solved iteratively using a combination technique preconditioner. By using a fixed-point iteration, this can also be used to compute the solution of nonlinear Vlasov-Poisson equations. 

We aim to use the method for more complex geometries -- for example, to simulate a plasma in an accelerator \cite{kessler2020}. For such realistic problems it will be necessary to further parallelize the method. Again, the combination technique naturally leads to independent problems where each problem is a transport equation on anisotropic full grids which itself can be solved on multiple machines. It remains to further investigate more efficient preconditioners for these systems.

The datasets generated during and/or analyzed in the current study are available from the corresponding author on reasonable request.

\subsection*{Acknowledgement}

This version of the article has been accepted for publication, after peer review but is not the Version of Record and does not refect post-acceptance improvements, or any corrections. The Version of Record is available online at: \\
\url{https://doi.org/10.1007/s42985-023-00250-2}.

\bibliography{SG_DG_SD}

\end{document}